\theoremstyle{plain}
\newtheorem{theorem}[subsection]{{\bf Theorem}}
\newtheorem*{theorem*}{{\bf Theorem}}
\newtheorem{corollary}[subsection]{{\bf Corollary}}
\newtheorem*{corollary*}{{\bf Corollary}}
\newtheorem{proposition}[subsection]{{\bf Proposition}}
\newtheorem{lemma}[subsection]{{\bf Lemma}}
\theoremstyle{definition}
\theoremstyle{remark}
\newtheorem{example}[subsection]{{\it Example}}
\numberwithin{equation}{subsection}
\DeclareMathOperator{\SL}{SL}
\DeclareMathOperator{\UT}{UT}
\DeclareBoldMathCommand{\bbot}{\bot}
\DeclareSymbolFont{cyrletters}{OT2}{wncyr}{m}{n}
\DeclareMathSymbol{\Sha}{\mathalpha}{cyrletters}{"58}
\DeclareMathOperator{\pwc}{pwc}
\DeclareMathOperator{\pwh}{pwh}
\DeclareMathOperator{\cl}{cl}
\begin{document}
\title[Powerful class]{The powerful class of groups}
\author{Primo\v z Moravec}
\address{{
Faculty of  Mathematics and Physics, University of Ljubljana,
and Institute of Mathematics, Physics and Mechanics,
Slovenia}}
\email{primoz.moravec@fmf.uni-lj.si}
\subjclass[2020]{20E18, 20D15}
\keywords{Powerful class, pro-$p$ group, finite $p$-group.}
\thanks{ORCID: \texttt{https://orcid.org/0000-0001-8594-0699}. The author acknowledges the financial support from the Slovenian Research Agency, research core funding No. P1-0222, and projects No. J1-3004, N1-0217, J1-2453, J1-1691. He would also like to thank the referee for simplifying some of the arguments.}
\date{\today}
\begin{abstract}
\noindent
Pro-$p$ groups of finite powerful class are studied. We prove that these are $p$-adic analytic, and further describe their structure when their powerful class is small. It is also shown that there are only finitely many finite $p$-groups of fixed coclass and powerful class.
\end{abstract}
\maketitle
\section{Introduction}
\label{s:intro}

\noindent
Throughout this paper we assume that $p$ is an odd prime.

Powerful pro-$p$ groups play a fundamental role in Lazard's characterization of $p$-adic analytic groups \cite{Laz65}. In addition to that, their finite counterparts were first systematically discussed by Lubotzky and Mann \cite{LM87}, and they turn out to share several properties with abelian groups. 

Mann \cite{Man11} introduced the notion of powerful class of a finite $p$-group $G$ by considering ascending series of normal subgroups with consecutive quotients being powerfully embedded in the corresponding quotient of $G$. He demonstrated that finite $p$-groups of small powerful class have well behaved power structure, and thus they are not far away from being powerful.

The purpose of this note is to consider pro-$p$ groups of finite powerful class. These are common generalizations of powerful or nilpotent pro-$p$ groups. Our first main result goes as follows:

\begin{theorem*}
    Let $G$ be a finitely generated pro-$p$ group of finite powerful class. Then $G$ is $p$-adic analytic. The set of all elements of $G$ of finite order forms a finite subgroup of $G$.
\end{theorem*}

The second part of the above result follows from obtaining a bound for the exponent of $\Omega_i(G)$ in terms of $i$ and powerful class in the case when $G$ is a finite $p$-group. The argument relies on techniques developed by Fern\' andez-Alcober, Gonz\' alez-S\' anchez and Jaikin-Zapirain \cite{FGJ08}.

We proceed by looking into pro-$p$ groups of small powerful class. These are closely related to pro-$p$ groups admitting potent filtrations, also known as PF-groups:

\begin{theorem*}
    Every finitely generated pro-$p$ group of small powerful class is a PF-group.
\end{theorem*}

Gonz\'{a}lez-S\'{a}nchez \cite{GS07} showed that torsion-free PF-groups are precisely the $p$-saturable groups. These groups naturally admit a Lie algebra structure that  turns the group into a $p$-saturable Lie algebra. If $G$ is a finitely generated pro-$p$ group of small powerful class, the above result shows that $G$ is $p$-saturable. We show that the corresponding Lie algebra also has small powerful class. On the other hand, we exhibit an example showing that Kirillov's orbit method cannot be applied in general to derive the irreducible representations of a torsion-free pro-$p$ group of small powerful class.

If $G$ is a finite $p$-group of order $p^n$ and $c$ is its nilpotency class, then $n-c$ is called the {\it coclass} of $G$. Coclass theory \cite{LGM02} works towards understanding the structure of finite $p$-groups according to coclass. We show:

\begin{theorem*}
    Given $p$, $r$ and $k$, there are only finitely many finite $p$-groups of
coclass $r$ and powerful class at most $k$.
\end{theorem*}

The proof uses Shalev's detailed description of the uniserial structure of large finite $p$-groups of given coclass, {\it cf}. \cite{LGM02}.
A similar method shows that there are only finitely many PF $p$-groups of fixed coclass.

\section{Powerful class}
\label{s:pwc}

\noindent
A normal subgroup $N$ of a finite $p$-group $G$ is {\it powerfully embedded} in $G$ if $[N,G]\le N^p$. Similarly, if $G$ is a pro-$p$ group and $N$ a closed normal subgroup of $G$, then $N$ is powerfully embedded in $G$ if $[N,G]\le N^p$. Here $N^p$ stands for the closure of the abstract group $N^p$, we omit the closure operator throughout the text. It is easy to see that, in the pro-$p$ setting, $N$ is powerfully embedded in $G$ if and only $NK/K$ is powerfully embedded in $G/K$ for all open normal subgroups $K$ of $G$. If $G$ is powerfully embedded in itself, we say that $G$ is a {\it powerful group}. 
The definition is slightly different when $p=2$, as the condition of being powefully embedded is stated as  $[N,G]\le N^4$. But since we always assume that $p>2$, we will not use that. 

Let $G$ be a finite $p$-group. Denote by $\eta(G)$ the largest powefully embedded subgroup of $G$. Note that $\eta(G)$ is the product of all powerfully embedded subgroups of $G$. Clearly we have that $Z(G)$ is contained in $\eta (G)$. 

We recall the notion of powerful class introduced by Mann \cite{Man11}.
The {\it upper $\eta$-series of $G$} is defined by $\eta_0(G)=1$ and $$\eta_{i+1}(G)/\eta_i(G)=\eta(G/\eta_i(G))$$
for $i\ge 0$. The smallest $k$ with $\eta_k(G)=G$ is called the {\it powerful class} of $G$. We use the notation $\pwc(G)=k$. Ocasionally we use the shorthand notation $\eta_i$ for $\eta_i(G)$.
An ascending series
$1=N_0\le N_1\le N_2\le \cdots$
of normal subgroups of $G$ is said to be an {\it $\eta$-series} if $N_{i+1}/N_i$ is powerfully embedded in $G/N_i$ for all $i$. The shortest length $k$ of an $\eta$-series with $N_k=N$ is called the {\it powerful height} of $N$. It is denoted by $\pwh(N)$. A group $G$ is said to have {\it small powerful class} if $\pwc(G)<p$. Similarly, a normal subgroup $N$ of $G$ has {\it small powerful height} if $\pwh(N)<p$.

It is easily seen that the upper $\eta$-series is the fastest growing $\eta$-series in a group:

\begin{proposition}
    \label{p:etafast}
    Let $G$ be a finite $p$-group. Let $1=N_0\le N_1\le\cdots\le N_k=G$ be an $\eta$-series in $G$. Then $N_i\subseteq \eta_i(G)$.
\end{proposition}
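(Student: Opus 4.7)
The plan is to argue by induction on $i$. The base case $i=0$ is immediate since $N_0=1=\eta_0(G)$. For the inductive step, I would assume that $N_i\subseteq\eta_i(G)$ and aim to show $N_{i+1}\subseteq\eta_{i+1}(G)$. By the definition of the upper $\eta$-series, this is equivalent to showing that $N_{i+1}\eta_i(G)/\eta_i(G)$ is contained in $\eta(G/\eta_i(G))$, the largest powerfully embedded subgroup of $G/\eta_i(G)$. So it suffices to verify that $N_{i+1}\eta_i(G)/\eta_i(G)$ is powerfully embedded in $G/\eta_i(G)$.

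The key technical ingredient is that powerful embeddability descends along quotients: if $K\le L$ are normal subgroups of $G$ and $M/K$ is powerfully embedded in $G/K$, then $ML/L$ is powerfully embedded in $G/L$. The verification is a short commutator calculation. From $[M,G]\subseteq M^pK$ and the normality of $L$ (giving $[L,G]\subseteq L$), and using $[ML,G]=[M,G][L,G]$, one gets
\[
[ML,G]\subseteq M^pKL\subseteq (ML)^pL,
\]
which is the defining condition for $ML/L$ to be powerfully embedded in $G/L$.

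Applying this to $M=N_{i+1}$, $K=N_i$ and $L=\eta_i(G)$ (which contains $N_i$ by the inductive hypothesis), and using that $N_{i+1}/N_i$ is powerfully embedded in $G/N_i$ by assumption on the $\eta$-series, we conclude that $N_{i+1}\eta_i(G)/\eta_i(G)$ is powerfully embedded in $G/\eta_i(G)$. By the maximality of $\eta(G/\eta_i(G))=\eta_{i+1}(G)/\eta_i(G)$, this yields $N_{i+1}\subseteq\eta_{i+1}(G)$, completing the induction.

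I do not expect any serious obstacles here: the only substantive point is the quotient-stability of powerful embeddability, and this reduces to a one-line commutator identity together with the fact that $M^p\subseteq(ML)^p$. Everything else is formal bookkeeping with the definitions.
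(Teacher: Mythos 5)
Your proof is correct and follows essentially the same route as the paper's: induction on $i$, with the inductive step reducing to the observation that $N_{i+1}\eta_i(G)/\eta_i(G)$ is powerfully embedded in $G/\eta_i(G)$ and hence lies in $\eta(G/\eta_i(G))=\eta_{i+1}(G)/\eta_i(G)$. The only difference is that you spell out the commutator verification of quotient-stability of powerful embeddability, which the paper leaves implicit.
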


\begin{proof}
    The claim is true for $i=0,1$. Suppose it holds for some $i\ge 1$. The group $N_{i+1}/{N_i}$ is powerfully embedded in $G/N_i$. By induction assumption, we have $N_i\subseteq \eta_i$. Therefore $N_{i+1}\eta_i/\eta_i$ is powerfully embedded in $G/\eta_i$. It follows from here that $N_{i+1}\eta_i/\eta_i\subseteq \eta (G/\eta_i)=\eta_{i+1}/\eta_i$. Hence we get $N_{i+1}\subseteq \eta_{i+1}$, as required.
\end{proof}

The notion of powerful class can be extended to the pro-$p$ setting. 
We say that a pro-$p$ group $G$ has {\it finite powerful class} if it has an $\eta$-series of closed subgroups of finite length that ends in $G$.
Given a pro-$p$ group $G$, define $\eta(G)$ be the product of all closed normal subgroups of $G$ that are powerfully embedded in $G$. Then $\eta (G)$ is a closed subgroup of $G$ containing all powerfully embedded subgroups of $G$. The upper $\eta$-series of $G$ can be defined as in the finite case. Then $G$ has finite powerful class if and only there exists $k$ such that $\eta_k(G)=G$. The smallest such $k$ is the {\it powerful class} of $G$. If a pro-$p$ group $G$ has powerful class $\le k$, then it is an inverse limit of finite $p$-groups of powerful class $\le k$.


We first collect some properties of the upper $\eta$-series. These will be used throughout the text without further reference. 

\begin{lemma}
    \label{l:center}
    let $G$ be a finitely generated pro-$p$ group. Then
    $$\eta(G)/\eta(G)^p=Z(G/\eta(G)^p).$$
\end{lemma}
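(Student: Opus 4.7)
The plan is to prove the two inclusions separately. For the inclusion $\eta(G)/\eta(G)^p \subseteq Z(G/\eta(G)^p)$, I would first verify that $\eta(G)$ is itself powerfully embedded in $G$. This reduces to the statement that the (closed) product of two powerfully embedded closed normal subgroups $M, N$ is again powerfully embedded, since
\[
[MN, G] = [M,G]\,[N,G] \le M^p N^p \le (MN)^p,
\]
where the last inclusion uses the elementary fact that $H \le K$ implies $H^p \le K^p$. A straightforward induction handles any finite product, and passing to the closure of the (directed) union over all finite subproducts of powerfully embedded closed normal subgroups gives that $\eta(G)$ is powerfully embedded. The inclusion $\eta(G)/\eta(G)^p \subseteq Z(G/\eta(G)^p)$ is then just the unpacking of $[\eta(G), G] \le \eta(G)^p$.

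For the reverse inclusion $Z(G/\eta(G)^p) \subseteq \eta(G)/\eta(G)^p$, let $N$ be the full preimage in $G$ of $Z(G/\eta(G)^p)$. Then $N$ is closed and normal in $G$, contains $\eta(G)$, and satisfies $[N,G] \le \eta(G)^p$ by construction. Since $\eta(G) \le N$, the inclusion $\eta(G)^p \le N^p$ holds, and therefore $[N,G] \le N^p$. Thus $N$ is powerfully embedded in $G$, and by the maximality property of $\eta(G)$ we obtain $N \le \eta(G)$, hence $N = \eta(G)$, which yields the claim.

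The only non-routine point is the first one, namely that $\eta(G)$ is itself powerfully embedded: in the finite setting this is a standard consequence of the product lemma above, but in the pro-$p$ setting one must be careful with closures, because $N^p$ denotes the closure of the abstract subgroup generated by $p$-th powers. This is handled by the inverse limit description of $G$ in terms of its finite quotients, together with the observation that for each open normal $K$ of $G$ the image $\eta(G)K/K$ is contained in $\eta(G/K)$; everything else in the proof is a direct translation of definitions.
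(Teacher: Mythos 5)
Your proof is correct and takes essentially the same route as the paper: the forward inclusion is exactly the statement that $\eta(G)$ is powerfully embedded, and the reverse inclusion applies the maximality of $\eta(G)$ to the full preimage $N$ of the centre, noting $[N,G]\le\eta(G)^p\le N^p$. The paper proves the marginally more general identity $\eta_{k+1}/\eta_{k+1}^p\eta_k=Z(G/\eta_{k+1}^p\eta_k)$ by the same two-step argument, of which the lemma is the case $k=0$.
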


\begin{proof}
The claim follows from a more general formula $$\eta_{k+1}/\eta_{k+1}^p\eta_k=Z(G/\eta_{k+1}^p\eta_k),$$ which holds for all $k\ge 0$. Namely, denote $N/\eta_{k+1}^p\eta_k=Z(G/\eta_{k+1}^p\eta_k)$. As $[\eta_{k+1},G]\le \eta_{k+1}^p\eta_k$ holds by definition, we have $\eta_{k+1}\subseteq N$. Conversely, $[N,G]\le \eta_{k+1}^p\eta_k\le N^p\eta_k$ shows that $N/\eta_k$ is powerfully embedded in $G/\eta_k$. Therefore, $N/\eta_k\le \eta(G/\eta_k)=\eta_{k+1}/\eta_k$, which concludes the proof.
\end{proof}

\begin{lemma}
    \label{l:etaseries} 
    Let $G$ be a finitely generated pro-$p$ group. Then the following hold:
    \begin{enumerate}
        \item $Z_i(G)\le \eta_i(G)$ for all $i\ge 0$.
        \item If $G$ is a nilpotent group, then $\pwc(G)\le \cl (G)$, where $\cl(G)$ is the nilpotency class of $G$.
        \item $\eta_i(G/\eta_j(G))=\eta_{i+j}(G)/\eta_j(G)$.
        \item $\pwh (\eta_i(G))\le i$.
        \item $\pwc (\eta_i(G))\le i$.
        \item $[\eta_i(G),{}_iG]\le \eta_i(G)^p$.
        \item $\eta_i(G/\eta(G)^p)=\eta_i(G)/\eta(G)^p$ for all $i\ge 1$.
    \end{enumerate}
\end{lemma}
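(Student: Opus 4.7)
Parts (1)--(5) are essentially formal. For (1), the upper central series of $G$ is itself an $\eta$-series, because each $Z_{i+1}(G)/Z_i(G)$ is central in $G/Z_i(G)$ and hence powerfully embedded; then Proposition 2.2 (whose proof carries over verbatim to the pro-$p$ setting) yields $Z_i(G)\le \eta_i(G)$. Part (2) is an immediate corollary, since $G$ nilpotent of class $c$ has $Z_c(G)=G$. For (3), I would induct on $i$: if $\eta_i(G/\eta_j)=\eta_{i+j}/\eta_j$, then $(G/\eta_j)/\eta_i(G/\eta_j)\cong G/\eta_{i+j}$, and the definition of the upper $\eta$-series yields $\eta_{i+1}(G/\eta_j)/\eta_i(G/\eta_j)=\eta(G/\eta_{i+j})=\eta_{i+j+1}/\eta_{i+j}$. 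For (4) the truncated series $1\le \eta_1\le\cdots\le \eta_i$ is already an $\eta$-series of length $i$ ending at $\eta_i$; for (5) the same series serves inside $\eta_i$, since the relation $[\eta_{j+1},G]\le \eta_{j+1}^p\eta_j$ immediately restricts to $[\eta_{j+1},\eta_i]\le \eta_{j+1}^p\eta_j$.

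The substance lies in (6) and (7). I would prove (6) by induction on $i$, the base $i=1$ being the definition of powerful embedding. For the inductive step, set $K:=\eta_{i+1}(G)^p$ and reduce modulo $K$. In $\bar G:=G/K$, the defining relation $[\eta_{i+1},G]\le \eta_{i+1}^p\eta_i$ becomes $[\bar\eta_{i+1},\bar G]\le \bar\eta_i$, while the inductive hypothesis applied to $G$ yields $[\eta_i,{}_iG]\le \eta_i^p\le K$, so $[\bar\eta_i,{}_i\bar G]=1$. Composing the two inclusions gives $[\bar\eta_{i+1},{}_{i+1}\bar G]=1$, which is exactly $[\eta_{i+1},{}_{i+1}G]\le \eta_{i+1}^p$.

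For (7), set $H:=G/\eta(G)^p$ and induct on $i$. The inductive step is bookkeeping: once $\eta_i(H)=\eta_i(G)/\eta(G)^p$, one has $H/\eta_i(H)\cong G/\eta_i(G)$, and applying the definition of the upper $\eta$-series passes the identification to $i+1$. The real work is the base case $i=1$. Let $N\supseteq \eta(G)^p$ be the preimage in $G$ of $\eta(H)=\eta_1(H)$, and put $M:=N\eta(G)$. Then $M/\eta(G)^p$ is the product of the powerfully embedded subgroup $N/\eta(G)^p$ with $\eta(G)/\eta(G)^p=Z(H)$ (Lemma 2.3), so $M/\eta(G)^p$ is itself powerfully embedded in $H$. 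Since $M\supseteq \eta(G)$ we have $M^p\supseteq \eta(G)^p$, hence $[M,G]\le M^p\eta(G)^p=M^p$, making $M$ powerfully embedded in $G$. Therefore $M\le \eta(G)$, which forces $N\le \eta(G)$ and thus $\eta_1(H)\subseteq \eta(G)/\eta(G)^p$; the reverse inclusion is immediate from $Z(H)\le \eta(H)$. This base case, which leans essentially on Lemma 2.3, is the main obstacle in the whole proof; the other parts are bookkeeping around the definition of the upper $\eta$-series.
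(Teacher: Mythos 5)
Your proof is correct, and for most parts it runs parallel to the paper's; the places where it diverges are worth recording. For (3) and for the inductive step of (7) you replace the paper's two-inclusion arguments with the observation that $(G/\eta_j(G))/\eta_i(G/\eta_j(G))$ is canonically isomorphic to $G/\eta_{i+j}(G)$ and that $\eta(-)$ is isomorphism-invariant; this is a genuine streamlining. More substantially, for (5) the paper argues by induction on $i$, passing to the quotient by $\eta(\eta_{i+1}(G))$ and using $\eta(G)\le \eta(\eta_{i+1}(G))$, whereas you simply note that $1\le\eta_1\le\cdots\le\eta_i$ is already an $\eta$-series of the group $\eta_i(G)$ itself, since $[\eta_{j+1},\eta_i]\le[\eta_{j+1},G]\le\eta_{j+1}^p\eta_j$; this is shorter and avoids the induction entirely. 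Your (6) is the paper's commutator computation recast modulo $\eta_{i+1}(G)^p$, and your base case of (7) is in substance the paper's, both hinging on Lemma \ref{l:center}; note only that the containment $\eta(G)\le N$ already follows from $Z(H)\le\eta(H)$, so your $M=N\eta(G)$ equals $N$ and the detour through the product is harmless but unnecessary. One pedantic point on (1): Proposition \ref{p:etafast} is stated for an $\eta$-series terminating at $G$, which the upper central series of a non-nilpotent group does not do, but its proof establishes the termwise containment $N_i\le\eta_i(G)$ without ever using $N_k=G$, so your appeal to it is sound.
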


\begin{proof}
    Denote $Z_i=Z_i(G)$. The property (1) obviously holds for $i=0,1$. Suppose the assertion holds for some $i\ge 1$. As $[Z_{i+1},G]\le Z_i\le\eta_i$, it follows that $Z_{i+1}\eta_i/\eta_i$ is powerfully embedded in $G/\eta_i$. Thus $Z_{i+1}\eta_i/\eta_i\le\eta(G/\eta_i)=\eta_{i+1}/\eta_i$, and the assertion is proved for $i+1$ as well. In particular, (2) follows directly from here.

    We prove (3) by induction on $i$. We have the required equality for $i=0,1$, so we may assume it holds for some $i\ge 1$. Denote $N/\eta_j=\eta_{i+1}(G/\eta_j)$.
    It follows that $[N/\eta_j,G/\eta_j]\le (N/\eta_j)^p\eta_i(G/\eta_j)$. By induction assumption, this gives $[N,G]\le N^p\eta_{i+j}$. This implies that $N\eta_{i+j}/\eta_{i+j}$ is powerfully embedded in $G/\eta_{i+j}$, therefore
    $N\eta_{i+j}/\eta_{i+j}\le\eta(G/\eta_{i+j})=\eta_{i+j+1}/\eta_{i+j}$. We conclude that $N\le \eta_{i+j+1}$. Conversely, we have that $[\eta_{i+j+1},G]\le \eta_{i+j+1}^p\eta_{i+j}$ by definition. This can be restated as the fact that the quotient group $(\eta_{i+j+1}/\eta_j)/(\eta_i(G/\eta_j))$ is powerfully embedded in $(G/\eta_j)/\eta_i(G/\eta_j)$. Therefore we have that
    $\eta_{i+j+1}/\eta_j$ is contained in $\eta_{i+1}(G/\eta_j)=N/\eta_j$.

    (4) is obvious by definition. To prove (5), we use induction on $i$. We may assume that the inequality holds for $i\ge 1$ and for all groups $G$. Let $P=\eta(\eta_{i+1})$. Then we obviously have that $\eta(G)$ is contained in $P$. Therefore 
    \begin{align*}
        \pwc (\eta_{i+1}) &=\pwc (\eta_{i+1}/P)+1\\
        &\le \pwc (\eta_{i+1}/\eta_1)+1\\
        &=\pwc(\eta_i(G/\eta_1))+1\le i+1.
    \end{align*}

    Note that (6) holds for $i=0,1$. Assume it holds for $i\ge 1$. Then
    $[\eta_{i+1},{}_{i+1}G]\le [\eta_{i+1}^p\eta_i,{}_iG]=[\eta_{i+1}^p,{}_iG][\eta_i,{}_iG]\le \eta_{i+1}^p\eta_i^p=\eta_{i+1}^p$.

    Let us prove (7). Denote $N_i/\eta^p=\eta_i(G/\eta ^p)$ for $i\ge 1$, and $N_0=\eta^p$. We have that $[N_i,G]\le N_i^pN_{i-1}\eta^p=N_i^pN_{i-1}$ for all $i\ge 1$. 
    We proceed by induction.
    In the case when $i=1$, 
    notice that Lemma \ref{l:center} gives $\eta/\eta^p=Z(G/\eta^p)\le \eta(G/\eta^p)$, therefore $\eta\le N_1$.
    Conversely, 
    the fact that $[N_1,G]\le N_1^p$ shows that $N_1$ is contained in $\eta(G)$. 
    Asssume the claim holds for some $i\ge 1$. First, we have that $[N_{i+1},G]\le N_{i+1}^pN_i=N_{i+1}^p\eta_i$. Thus $N_{i+1}\eta_i/\eta_i$ is powerfully embedded in $G/\eta_i$. This shows that $N_{i+1}\eta_i/\eta_i$ is contained in $\eta(G/\eta_i)=\eta_{i+1}/\eta_i$, therefore $N_{i+1}\le\eta_{i+1}$. On the other hand, $[\eta_{i+1}/\eta^p,G/\eta^p]=[\eta_{i+1},G]\eta^p/\eta^p\le \eta_{i+1}^p\eta_i/\eta^p=(\eta_{i+1}/\eta^p)^p\eta_i(G/\eta^p)$. This demonstrates that $\eta_{i+1}/\eta^p$ is contained in $\eta_{i+1}(G/\eta^p)$, hence $\eta_{i+1}\le N_{i+1}$.
\end{proof}

The next lemma gives some information on pro-$p$ groups with powerful class two:

\begin{lemma}
    \label{l:pwc2}
    Let $G$ be a finitely generated pro-$p$ group and suppose that $G/\eta(G)$ is powerful. Then $G/\eta(G)$ is elementary abelian.
\end{lemma}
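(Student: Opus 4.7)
The plan is to show $G^p \le \eta(G)$; together with the hypothesis that $G/\eta(G)$ is powerful (which gives $[G,G] \le G^p\eta(G)$), this will yield that $G/\eta(G)$ is abelian of exponent $p$, hence elementary abelian.

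First I would reduce to the case of a finite $p$-group $G$ of powerful class at most two by writing $G$ as an inverse limit of its finite quotients of bounded powerful class. I would then pass to the quotient $G/\eta(G)^p$: by Lemma~\ref{l:etaseries}(7) the upper $\eta$-series is preserved modulo $\eta(G)^p$, so in particular the hypothesis that $G/\eta(G)$ is powerful is unchanged; and by Lemma~\ref{l:center}, $\eta(G)/\eta(G)^p = Z(G/\eta(G)^p)$. So one may assume at the outset that $\eta(G)$ is central in $G$ and has exponent $p$. Under this reduction, the goal $G^p \le \eta(G)$ becomes the statement $[G^p,G] = 1$.

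The intermediate claim I would aim to prove is $[G,G] \le \eta(G)$, i.e., that $G$ has nilpotency class at most two. Once this is done, everything else finishes up easily: any commutator $[x,y]$ lies in $\eta(G)$ and has order dividing $p$, so by the Hall--Petresco formula applied in a class-two group with $p$ odd we get $[x^p,y] = [x,y]^p = 1$ for all $x,y \in G$. Hence every $p$-th power is central and so lies in $Z(G) = \eta(G)$, giving $G^p \le \eta(G)$ and completing the proof.

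To establish $[G,G] \le \eta(G)$, the plan is to show that $[G,G]$ is powerfully embedded in $G$ and then invoke the maximality of $\eta(G)$. Since $G/\eta(G)$ is powerful by hypothesis, $[G,G]\eta(G)/\eta(G)$ is powerfully embedded in $G/\eta(G)$ as the derived subgroup of a powerful group. Combining this with Lemma~\ref{l:etaseries}(6) applied with $i = 2$ (which gives $\gamma_3(G) \le G^p$) and the standard powerful-group identity $[P,P^p] = [P,P]^p$ applied to $P = G/\eta(G)$, one obtains the refined inclusion $\gamma_3(G) \le [G,G]^p \eta(G)$.

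The main obstacle will be lifting the powerful embeddedness from the quotient $G/\eta(G)$ to $G$ itself, that is, absorbing the extra factor $\eta(G)$ in the above inclusion into $[G,G]^p$. The hard part is ruling out that $\gamma_3(G) \cap \eta(G)$ contributes anything outside $[G,G]^p$, and I expect this will require a careful Hall--Petresco calculation exploiting that $\eta(G)$ is central of exponent $p$ in our reduction and that $[G,G] \le G^p\eta(G)$.
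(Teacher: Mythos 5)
Your framing matches the paper's: reduce to $Q=G/\eta(G)^p$, use Lemma~\ref{l:center} to identify $\eta(G)/\eta(G)^p$ with $Z(Q)$, prove $G^p\le\eta(G)$, and then conclude that a powerful group of exponent $p$ is abelian. The final steps you describe are fine. But the proposal has a genuine gap exactly where you flag ``the main obstacle'': you never actually prove the intermediate claim that $[G,G]$ is powerfully embedded in $G$ (equivalently, after your reduction, that $Q$ has class at most two). What you do establish, namely $\gamma_3(G)\le [G,G]^p\eta(G)$, is just the statement that $[G,G]\eta(G)/\eta(G)$ is powerfully embedded in the powerful quotient $G/\eta(G)$, and absorbing the central exponent-$p$ factor $\eta(G)$ into $[G,G]^p$ is not a routine Hall--Petrescu manipulation; it is essentially the whole content of the lemma. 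Note that quasi-powerful groups (those with $Q/Z(Q)$ powerful) need not have class two in general, so your intermediate claim really does depend on the extra structure here and requires a genuine argument, which the proposal only promises.

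The paper closes precisely this gap by quoting a nontrivial external result: Proposition~3.6 of Williams' paper on quasi-powerful $p$-groups, which says that if $Q/Z(Q)$ is powerful ($p$ odd) then $Q^pZ(Q)$ is powerfully embedded in $Q$. Applied to $Q=G/\eta(G)^p$ with $Z(Q)=\eta(G)/\eta(G)^p$, this gives at once that $G^p\eta(G)$ is powerfully embedded in $G$, whence $G^p\le\eta(G)$ by maximality of $\eta(G)$, and the rest follows as you say. So either cite that result (or reprove it), or supply the missing computation; as written, the argument reduces the lemma to an unproven claim of comparable difficulty.
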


\begin{proof}
    Denote $Q=G/\eta(G)^p$.
    By Lemma \ref{l:center} we conclude that $Q/Z(Q)$ is powerful. From \cite[Proposition 3.6]{Wil21a} (the pro-$p$ version has the same proof) it follows that $Q^pZ(Q)$ is powerfully embedded in $Q$. We quickly deduce that $G^p\eta(G)$ is powerfully embedded in $G$, therefore $G^p\le \eta(G)$. The quotient $G/\eta(G)$ is thus a powerful group of exponent $p$, hence it is abelian.
\end{proof}

\begin{corollary}
    \label{c:k-1}
    Let $G$ be a finitely generated pro-$p$ group of powerful class $k$. Then $\eta_{k-1}(G)$ is open in $G$.
\end{corollary}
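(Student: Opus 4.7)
The plan is to apply Lemma \ref{l:pwc2} to the quotient $\bar G = G/\eta_{k-2}(G)$ so as to force $G/\eta_{k-1}(G)$ to be elementary abelian, and then conclude finiteness from finite generation.

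I would first observe, via Lemma \ref{l:etaseries}(3), that $\eta_i(\bar G) = \eta_{i+k-2}(G)/\eta_{k-2}(G)$ for all $i$. In particular $\eta(\bar G) = \eta_{k-1}(G)/\eta_{k-2}(G)$, and since $\pwc(G)=k$ we have $\eta_2(\bar G) = \bar G$, so that
\[
\bar G/\eta(\bar G) \;\cong\; G/\eta_{k-1}(G)
\]
is a powerful pro-$p$ group. Because $\bar G$ inherits topological finite generation from $G$, Lemma \ref{l:pwc2} applied to $\bar G$ then yields that $G/\eta_{k-1}(G)$ is elementary abelian.

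To conclude, I would use the fact that a topologically finitely generated pro-$p$ group of exponent $p$ is finite: viewed as a profinite $\mathbb{F}_p$-vector space, any finite topological generating set is already an abstract $\mathbb{F}_p$-spanning set, since the abstract $\mathbb{F}_p$-span of a finite set is itself finite and therefore closed. Hence $G/\eta_{k-1}(G)$ is finite, which is exactly the assertion that $\eta_{k-1}(G)$ is open in $G$.

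I do not anticipate a real obstacle, since the argument is essentially a direct repackaging of Lemma \ref{l:pwc2}. The one place where mild care is needed is the passage from $G$ to the quotient $\bar G$ when checking the hypothesis of Lemma \ref{l:pwc2}, and this is precisely what part (3) of Lemma \ref{l:etaseries} makes transparent.
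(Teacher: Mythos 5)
Your argument is correct and follows essentially the same route as the paper: both pass to $\bar G = G/\eta_{k-2}(G)$, note that $\bar G/\eta(\bar G)\cong G/\eta_{k-1}(G)$ is powerful because $\pwc(G)=k$, and apply Lemma \ref{l:pwc2}. The only cosmetic difference is at the end, where the paper phrases the conclusion as $\Phi(G)\le\eta_{k-1}(G)$ while you argue directly that a finitely generated elementary abelian pro-$p$ group is finite; these are equivalent.
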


\begin{proof}
    Note that $G/\eta_{k-1}$ is powerful. This implies that $(G/\eta_{k-2})/\eta(G/\eta_{k-2})$ is powerful. By Lemma \ref{l:pwc2} we have that $\Phi(G/\eta_{k-2})\le \eta(G/\eta_{k-2})$. Thus $\Phi(G)\le \eta_{k-1}$, and this concludes the proof. 
\end{proof}

We are ready to prove the first half of our first main result mentioned in the introduction:

\begin{proposition}
    \label{p:padic}
    Let $G$ be a finitely generated pro-$p$ group of finite powerful class. Then $G$ is $p$-adic analytic.
\end{proposition}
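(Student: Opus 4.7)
The plan is to induct on the powerful class $k = \pwc(G)$. For the base case $k \le 1$, the group $G$ is itself powerful, and finitely generated powerful pro-$p$ groups are well known to be $p$-adic analytic (Lubotzky--Mann), so there is nothing to do.

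For the inductive step, assume $k \ge 2$ and that the result holds for all finitely generated pro-$p$ groups of powerful class $< k$. Apply Corollary \ref{c:k-1} to conclude that $H := \eta_{k-1}(G)$ is open in $G$. Since $G$ is a finitely generated pro-$p$ group and $H$ is an open (hence of finite index) closed subgroup, $H$ is itself finitely generated as a pro-$p$ group. By Lemma \ref{l:etaseries}(5), $\pwc(H) \le k-1$, so by the inductive hypothesis $H$ is $p$-adic analytic.

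To finish, I invoke the fact that being $p$-adic analytic is a local property for pro-$p$ groups: a finitely generated pro-$p$ group is $p$-adic analytic if and only if it contains an open powerful (finitely generated) subgroup. Since $H$ is $p$-adic analytic, $H$ contains an open powerful subgroup $U$, and $U$ is still open in $G$. Therefore $G$ itself contains an open powerful subgroup, so $G$ is $p$-adic analytic.

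The argument is short and the only potential obstacle is ensuring the two closure properties used: that an open subgroup of a finitely generated pro-$p$ group is finitely generated, and that $p$-adic analyticity passes between a pro-$p$ group and its open subgroups. Both are standard in the Lubotzky--Mann theory, so the whole proof reduces to assembling Corollary \ref{c:k-1} and Lemma \ref{l:etaseries}(5) into a clean induction.
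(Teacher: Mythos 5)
Your proof is correct and follows essentially the same route as the paper: induct on the powerful class, use Corollary \ref{c:k-1} to see that $\eta_{k-1}(G)$ is open (hence finitely generated) with $\pwc(\eta_{k-1}(G))\le k-1$ by Lemma \ref{l:etaseries}(5), and conclude by the local nature of $p$-adic analyticity. The paper leaves the last step implicit, whereas you spell it out via an open powerful subgroup; the content is the same.
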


\begin{proof}
    Let $\pwc (G)=k$. We prove the result by induction on $k$. Clearly, the result holds true for $k=0,1$. Assume it holds for groups of powerful class $\le k-1$. By Corollary \ref{c:k-1}, we have that $\eta_{k-1}$ is open in $G$, therefore it is finitely generated. As $\pwc(\eta_{k-1})\le k-1$, we have that $\eta_{k-1}$ is $p$-adic analytic. Therefore $G$ is $p$-adic analytic.
\end{proof}

It is straightforward to see that if $G$ is a finitely generated pro-$p$ group with an open powerfully embedded subgroup, then $G$ has finite powerful class. When $G$ is nilpotent, the converse also holds:

\begin{proposition}
    \label{p:nilpotent}
    Let $G$ be a finitely generated nilpotent pro-$p$ group. Then $\eta(G)$ is open in $G$.
\end{proposition}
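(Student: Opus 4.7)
I would proceed by induction on the nilpotency class $c=\cl(G)$. The base case $c\le 1$ is immediate, since then $G$ is abelian and $\eta(G)=G$ is trivially open. For the inductive step with $c\ge 2$, the hypothesis applies to $G/Z(G)$, a finitely generated nilpotent pro-$p$ group of class $c-1$, giving that $\eta(G/Z(G))$ is open in $G/Z(G)$. Letting $H$ be the preimage of $\eta(G/Z(G))$ in $G$, one obtains an open normal subgroup with $Z(G)\le H$ and $[H,G]\le H^{p}Z(G)$.

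The next goal is to exhibit an open powerfully embedded subgroup of $G$ inside $H$. The natural candidate is $K=H^{p^{n}}Z(G)$ for $n$ sufficiently large. Openness is easy: $H$ is a finitely generated nilpotent pro-$p$ group, and $H/H^{p^{n}}$ is a finitely generated nilpotent group of exponent dividing $p^{n}$, hence finite, so $H^{p^{n}}$ and therefore $K$ is open in $G$. Because $Z(G)$ is central, $[K,G]=[H^{p^{n}},G]$, and it remains to verify that this sits inside $K^{p}$, which contains $H^{p^{n+1}}Z(G)^{p}$.

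This last commutator estimate is the main obstacle. I would attack it by combining the relation $[H,G]\le H^{p}Z(G)$ with Hall--Petresco-type identities for $p$-th powers of commutators in nilpotent groups: iterated application expresses $[H^{p^{n}},G]$ as a $p^{n}$-th power of $[H,G]$, multiplied by correction terms of strictly higher commutator weight. Since $G$ is nilpotent of class $c$, only finitely many corrections, all lying in $\gamma_{i}(G)$ for some $2\le i\le c$, actually contribute; choosing $n$ large enough forces each such term into $H^{p^{n+1}}Z(G)^{p}\le K^{p}$, so that $K\le\eta(G)$ and $\eta(G)$ is open.
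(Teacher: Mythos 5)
Your reduction is fine up to the last step: the base case, the passage to $G/Z(G)$, the choice of $H$ with $[H,G]\le H^pZ(G)$, and the openness of $K=H^{p^n}Z(G)$ are all correct. The gap is the commutator estimate you yourself flag as ``the main obstacle'', and the sketch you give for it does not close it. After Hall--Petrescu, a typical correction term has the form $c_i^{\binom{p^n}{i}}$ with $c_i\in[H,{}_iG]\cap\gamma_{i+1}(G)$ and $v_p\bigl(\tbinom{p^n}{i}\bigr)=n-v_p(i)$. Neither the finiteness of the number of such terms nor ``choosing $n$ large'' puts them into $H^{p^{n+1}}Z(G)^p$: you need $[H,{}_iG]^{p^{\,n-v_p(i)}}\le H^{p^{n+1}}Z(G)^p$, i.e.\ you must first show that $[H,{}_iG]$ lies in $H^{p^{v_p(i)+1}}Z(G)$ (up to further corrections). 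All you know from the inductive hypothesis is $[H,{}_iG]\le[H,G]\le H^pZ(G)$, which is one power of $p$ short even when $v_p(i)=0$, and trying to gain that power by writing $c_i^{p^n}\in (H^pZ(G))^{p^n}$ leads straight back to a power--commutator statement of the same type for the subgroup $H^pZ(G)$, which is not known to be powerful. Increasing $n$ shifts both sides of the required inclusion by the same amount and does not remove this constant deficit; closing it would require its own induction on commutator weight (the kind of calculus that in this paper is only carried out under hypotheses such as powerful class $<p$). So as written the proof is incomplete at its decisive step.

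For comparison, the paper avoids all of this: by Lemma \ref{l:center}, $Z\bigl(G/\eta(G)^p\bigr)=\eta(G)/\eta(G)^p$, which is finite because $\eta(G)$ is finitely generated; a finitely generated nilpotent group with finite centre is finite, so $G/\eta(G)^p$ is finite and $\eta(G)$ is open. If you want to keep a constructive flavour, the lesson is that the right object to power up is $\eta(G)$ itself (via its characterisation as a centre modulo $\eta(G)^p$), not an arbitrary preimage $H$ from the quotient by the centre.
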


\begin{proof}
    Note that $Z(G/\eta(G)^p)=\eta(G)/\eta(G)^p$ is a finite $p$-group, since $\eta(G)$ is finitely generated. As $G/\eta(G)^p$ is nilpotent, we get from here that it is finite \cite[5.2.22]{Rob82}. Thus the result follows.
\end{proof}

\begin{example}
    \label{ex:SLnZp}
    Let $n>2$ and let $S$ be a Sylow pro-$p$ subgroup of $\SL_n(\mathbb{Z}_p)$. Then $S$ can be seen as an inverse limit of upper unitriangular groups $\UT_n(\mathbb{Z}/p^m\mathbb{Z})$ \cite[p. 35]{DdSMS99}. Thus $S$ is nilpotent of class $n-1$. It follows that $\pwc(S)\le n-1$ and $\eta(S)$ is open in $S$. One can verify that $\eta(S)$ consists precisely of all those upper unitriangular matrices $(a_{ij})$ with $a_{i,i+\ell}\in p^{n-\ell-1}\mathbb{Z}_p$.
\end{example}

We end this section by mentioning the relationship with capability of groups. We say that a group $G$ is {\it capable} if there exists a group $Q$ with $Q/Z(Q)\cong G$. It is well known that non-trivial cyclic groups are not capable. Baer \cite{Bae38} classified finite abelian groups that are capable. We define a finite $p$-group $G$ to be {\it $\eta$-capable} if there exists a finite $p$-group $P$ with $P/\eta(P)\cong G$. Again, it is easy to see that a non-trivial cyclic group cannot be $\eta$-capable, see, for instance, \cite[p. 45]{DdSMS99}. Note that if a finite $p$-group $G$ is $\eta$-capable with $P/\eta(P)\cong G$, then $(P/\eta(P)^p)/Z(P/\eta(P)^p)\cong G$ by Lemma \ref{l:center}. This shows that $\eta$-capability implies the usual capability. The converse does not hold. The group $C_{p^2}\times C_{p^2}$ is capable by \cite{Bae38}, yet Lemma \ref{l:pwc2} shows that it is not $\eta$-capable, as all abelian $\eta$-capable $p$-groups are elementary abelian.

\section{Pro-$p$ groups of small powerful class}
\label{s:small}

\noindent
Recall that a pro-$p$ group $G$ is said to have small powerful class if $\pwc(G)<p$. Note that if a stronger condition $\pwc(G)<p-1$ holds, then $G$ satisfies the condition $\gamma_{p-1}(G)\le G^p$. Groups satisfying this property are called {\it potent} and are thoroughly described by Gonz\'{a}lez-S\'{a}nchez and Jaikin-Zapirain \cite{GJ04}. We are thus more or less only interested in the case $\pwc(G)=p-1$.

Mann's results on finite $p$-groups of small powerful class are summarized below. One may verify that similar properties hold for pro-$p$ groups of small powerful class and corresponding closed normal subgroups of small powerful height:

\begin{proposition}[\cite{Man11}]
    \label{p:mann}
    Let $G$ be a finite $p$-group.
    \begin{enumerate}
        \item If $G$ has small powerful class, then $G^p$ is powerful, and $G^p=\{ x^p\mid x\in G\}$.
        \item If $G=\langle a,b\rangle$ has small powerful class and $a^{p^e}=b^{p^e}=1$, then $G^{p^e}=1$.
        \item If $N$ is a normal subgroup of $G$ with small powerful height, then $[N^{p^k},G]=[N,G]^{p^k}\le [N,G^{p^k}]$.
    \end{enumerate}
\end{proposition}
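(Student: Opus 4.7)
The overall approach is induction on $\pwc(G) < p$, using the upper $\eta$-series $1 = \eta_0 \leq \eta_1 \leq \cdots \leq \eta_k = G$ (with $k < p$) as the working filtration. The decisive structural fact, already packaged as Lemma~\ref{l:etaseries}(6), is the absorption identity $[\eta_i(G), {}_i G] \leq \eta_i(G)^p$, together with $[\eta_{i+1}, G] \leq \eta_{i+1}^p \eta_i$ by definition. Combined with the Hall--Petresco formula, these identities let one control the correction terms arising when expanding $p$-th powers or commuting $p$-th powers past commutators. The bound $\pwc(G) < p$ is critical because it matches the $p$-divisibility of the binomials $\binom{p}{i}$ for $2 \leq i \leq p-1$ appearing in Hall--Petresco, so every error term either inherits a factor of $p$ or sits deep enough in the $\eta$-filtration to be absorbed.

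For part (1), to establish $G^p = \{x^p : x \in G\}$, I would apply Hall--Petresco to $(xy)^p = x^p y^p \prod_{i=2}^{p} c_i^{\binom{p}{i}}$ with $c_i \in \gamma_i(\langle x, y\rangle)$. For $2 \leq i \leq p-1$ the exponent is divisible by $p$, and Lemma~\ref{l:etaseries}(6) combined with an inner induction over the $\eta$-layers shows $c_i^{\binom{p}{i}} \in G^p$. For $i = p$ one uses that the $\eta$-series has length $<p$, so $\gamma_p(G)$ sits several layers down and iterated absorption gives $\gamma_p(G) \leq \eta_1^p \leq G^p$. This proves $G^p$ is closed under products, hence equals the set of $p$-th powers. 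For powerfulness, the inclusion $[G^p, G^p] \leq G^{p^2}$ follows from the commutator identity $[x^p, y^p] \equiv [x, y]^{p^2}$ modulo terms whose analysis through the $\eta$-filtration lands them in $(G^p)^p = G^{p^2}$.

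For (2), I would induct on $e$. When $e = 1$ and $\langle a, b \rangle$ has $a^p = b^p = 1$, every element has the form $a^i b^j u$ with $u \in \Phi(G)$; using (1), $(a^i b^j u)^p$ factors as $a^{ip} b^{jp} u^p$ times commutator corrections that vanish because of the $\eta$-filtration and the hypothesis $a^p = b^p = 1$. The inductive step passes to $G^p$, which by (1) is powerful with generators $a^p, b^p$ of exponent dividing $p^{e-1}$. For (3), the harder inclusion is the equality $[N^{p^k}, G] = [N, G]^{p^k}$; I would induct on $\pwh(N) < p$ and on $k$. When $\pwh(N) = 1$, $N$ is powerfully embedded and standard powerful-subgroup commutator identities apply. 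For larger powerful height, pass to an intermediate subgroup in an $\eta$-series for $N$, use induction, and reassemble via Lemma~\ref{l:etaseries}(6). The containment $[N, G]^{p^k} \leq [N, G^{p^k}]$ then follows from $[n, g]^{p^k} \equiv [n, g^{p^k}]$ modulo error terms lying in iterated commutators that the inductive hypothesis places inside $[N, G^{p^k}]$.

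The main obstacle is the analysis of the $c_p$ tail in part (1): one must show it lies in $G^{p^2}$, not merely in $G^p$, to conclude that $G^p$ is powerful. In the critical case $\pwc(G) = p - 1$ the $\eta$-series is short enough that this forces careful bookkeeping of which $\gamma_i$-layer meets which $\eta_j$-layer, and the argument ultimately hinges on repeatedly applying $[\eta_i, {}_i G] \leq \eta_i^p$ and tracking how commutator weight and filtration depth interact.
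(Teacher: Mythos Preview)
The paper does not give its own proof of this proposition: it is quoted as a summary of results from \cite{Man11}, with no argument beyond the citation. So there is no in-paper proof to compare against, and your proposal should be assessed on its own merits.

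You have assembled the right toolkit (Hall--Petresco, the $\eta$-filtration, and Lemma~\ref{l:etaseries}(6)), but the argument for part~(1) has a genuine gap. You expand $(xy)^p = x^p y^p \prod_{i=2}^{p} c_i^{\binom{p}{i}}$ and observe that each correction factor lies in $G^p$ (indeed, is itself a $p$-th power). Rearranging gives only
\[
x^p y^p \;=\; (xy)^p \cdot \bigl(\text{product of $p$-th powers}\bigr)^{-1},
\]
which is a tautology: every element of $G^p$ is a product of $p$-th powers by definition. It does \emph{not} show that $x^p y^p$ is a single $p$-th power, which is what closure of the set $\{x^p : x \in G\}$ under multiplication requires. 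The standard route, both in Mann's argument and in the parallel proofs for regular or powerful $p$-groups, is an induction on $|G|$: in a minimal counterexample one quotients by a central subgroup of order $p$ lying inside $\eta_1(G)^p$ (or an analogous term), applies the inductive hypothesis in the quotient to write $\bar{x}^p\bar{y}^p=\bar{z}^p$, and then lifts back, with the discrepancy lying in a group where it is already known to be a $p$-th power. Your sketch never sets up this reduction, and without it the step ``this proves $G^p$ is closed under products, hence equals the set of $p$-th powers'' does not follow.

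A related circularity appears in your powerfulness argument: writing $[x^p,y^p]\equiv [x,y]^{p^2}$ modulo controlled error terms is precisely the kind of power--commutator interchange that part~(3) asserts. Mann's approach is essentially to establish (3) (or an equivalent interchange lemma for subgroups of small powerful height) first, and then deduce the statements in (1) and (2) from it. If you reorganize your proof in that order and make the induction on $|G|$ explicit, the pieces you have identified do fit together.
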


Let $G$ be a pro-$p$ group. A closed normal subgroup $N$ of $G$ is {\it PF-embedded} in $G$ \cite{FGJ08} if there exists a $G$-central series $N=N_1\ge N_2\ge \cdots$  with trivial intersection $\cap_{i\in\mathbb N}N_i$ and $[N_i,{}_{p-1}G]\le N_{i+1}^p$ for all $i$. Such a series is called a {\it potent filtration} of $N$ in $G$. We also say that $G$ is a {\it PF-group} if it is PF-embedded in itself. It is clear that $N$ is PF-embedded in $G$ if and only if $NK/K$ is PF-embedded in $G/K$ for all open normal subgroups $K$ in $G$.

\begin{proposition}
    \label{p:smallPF}
    Let $G$ be a finitely generated pro-$p$-group and $N$ a normal subgroup of $G$. If $N$ has small powerful height, it is PF-embedded in $G$.
\end{proposition}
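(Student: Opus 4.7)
The plan is to construct a potent filtration of $N$ in $G$ from an $\eta$-series $1=N_0\le N_1\le\cdots\le N_k=N$ of length $k<p$, arguing by induction on $k$. For the base case $k=1$, $N$ is powerfully embedded in $G$ and hence powerful; the filtration $M_j=N^{p^{j-1}}$ is $G$-central with trivial intersection (by Proposition~\ref{p:padic} applied to $N$), and satisfies the potent condition $[M_j,{}_{p-1}G]\le M_{j+1}^p$ via iterated application of Proposition~\ref{p:mann}(3), combined with $[N,G]\le N^p$ and $p\ge 3$.

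For the inductive step I would first establish the auxiliary bound $[N_i,{}_iG]\le N_i^p$ for every $i\le k$, by the same induction as in the proof of Lemma~\ref{l:etaseries}(6): the crucial inputs are that $N_{i+1}^p$ is normal in $G$ (so $[N_{i+1}^p,{}_iG]\le N_{i+1}^p$) and that $[N_{i+1},G]\le N_{i+1}^p N_i$ by powerful embedding, after which expanding $[N_{i+1},{}_{i+1}G]\le[N_{i+1}^pN_i,{}_iG]$ and invoking the inductive hypothesis on $N_i$ finishes the step. Since $k\le p-1$, this specializes to $[N,{}_{p-1}G]\le N^p$. With this in hand, the natural candidate combines the lower $G$-central series of $N$ with a tower of $p$-th powers, of the shape $M_n=[N,{}_{n-1}G]N^p$ for $1\le n\le k+1$ (so that $M_{k+1}=N^p$), continued recursively with the analogous filtration for $N^p$, whose powerful height is no greater than $k$.

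The main obstacle I anticipate is verifying the potent condition $[M_n,{}_{p-1}G]\le M_{n+1}^p$ on the nose rather than only up to one extra factor of $p$: a direct computation with the auxiliary bound gives $[M_n,{}_{p-1}G]\le N^p$, which sits inside $M_{n+1}$ but not a priori inside $M_{n+1}^p$. This one-power gap is harmless when $k\le(p-1)/2$, since iterating Proposition~\ref{p:mann}(3) then yields $[N,{}_{p-1}G]\le N^{p^2}$ automatically, but for $(p-1)/2<k<p$ it must be closed by inserting finer intermediate terms chosen so that their $p$-th powers capture the residual commutators exactly, the tool for doing so being once more Proposition~\ref{p:mann}(3), which moves $p$-th powers across commutators in subgroups of small powerful height. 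Verifying that the resulting refined chain is $G$-central with trivial intersection and satisfies the exact potent condition throughout is the technical heart of the proof.
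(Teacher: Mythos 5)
Your base case and the auxiliary bound $[N_i,{}_iG]\le N_i^p$ (the analogue of Lemma \ref{l:etaseries}(6) for an arbitrary $\eta$-series) are fine, but the inductive step has a genuine gap, and you have identified it yourself: the candidate filtration $M_n=[N,{}_{n-1}G]N^p$ only gives $[M_n,{}_{p-1}G]\le N^p\le M_{n+1}$, one factor of $p$ short of the required $[M_n,{}_{p-1}G]\le M_{n+1}^p$, and your proposed remedy of ``inserting finer intermediate terms'' is never carried out. The range $(p-1)/2<k<p$ that you leave open includes $k=p-1$, which is the only genuinely new case (for $\pwc(G)<p-1$ the group is already potent), so the argument as written does not establish the proposition.

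The paper closes exactly this gap with a different filtration, built from the $\eta$-series itself rather than from the lower central series of $N$. Pad the series to $1=N_0\le\cdots\le N_{p-1}=N$, with $N_j=1$ for $j<0$, and set $M_1=N$ and $M_{i+1}=M_i^pN_{p-i-1}$: each step simultaneously takes $p$-th powers and descends one level of the $\eta$-series. Centrality follows from $[M_i^p,G]=[M_i,G]^p$ (Proposition \ref{p:mann}(3)) together with $[N_{p-i-1},G]\le N_{p-i-1}^pN_{p-i-2}$. The potent condition is then verified by reducing modulo $M_{i+2}^p$ and observing that $[N_{p-i-1},{}_kG]\le N_{p-i-k-1}$, so that after $p-1$ commutations the $\eta$-series index has dropped below zero and the residual term vanishes outright; the trivial intersection comes from $M_{p+k}=M_{p-1}^{p^{k+1}}$. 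This exploitation of the bounded length $p-1$ of the $\eta$-series to annihilate the leftover commutator is precisely the idea your construction lacks, and it is what makes the one-power gap disappear for all $k<p$ rather than only for $k\le(p-1)/2$.
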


\begin{proof}
    In the course of the proof, we use Proposition \ref{p:mann} (3) without further explicit reference.
    Let $1=N_0\le N_1\le \cdots \le N_{p-1}=N$ be an $\eta$-series of $N$ in $G$. Denote $N_j=1$ for $j<0$ and $N_\ell=N$ for $\ell\ge p$. Define
    \begin{align*}
        M_1 &= N,\\
        M_{i+1} &= M_i^pN_{p-i-1}.
    \end{align*}
    Note that all $M_i$ have small powerful height \cite[Lemma 2.5]{Man11}. Induction shows that we have a descending series $N=M_1\ge M_2\ge\cdots$. Let us first prove that this is a central series. Note that $[M_1,G]=[N,G]=[N_{p-1},G]\le N_{p-1}^pN_{p-2}=M_2$. Suppose that we have $[M_i,G]\le M_{i+1}$. Then $[M_{i+1},G]=[M_i^pN_{p-i-1},G]=[M_i^p,G][N_{p-i-1},G]\le [M_i,G]^pN_{p-i-1}^pN_{p-i-2}=M_{i+2}$, as $N_{p-i-1}^p\le M_{i+1}^p$.

    Now we show that $[M_i,{}_{p-1}G]\le M_{i+1}^p$. This holds for $i=1$, as the fact that we have a central series implies that
    $[M_1,{}_{p-1}G]\le M_p=M_{p-1}^pN_{-1}=M_{p-1}^p\le M_2^p$. For induction step, we may assume that $M_{i+2}^p=1$. 
    From $[N_{p-i-1},G]\le N_{p-i-1}^pN_{p-i-2}$ we readily obtain 
    $[N_{p-i-1},G,G]\le [N_{p-i-1},G]^p[N_{p-i-2},G]\le (N_{p-i-1}^pN_{p-i-2})^pN_{p-i-2}^pN_{p-i-3}=N_{p-i-3}$.
    Induction on $k$ shows that, under the above assumption, we have $[N_{p-i-1},{}_kG]\le N_{p-i-k-1}$ for all $k\ge 2$. 
    Finally, note that the above implies $[M_{i+1},{}_{p-1}G]=[M_{i}^pN_{p-i-1},{}_{p-1}G]=
    [M_i,{}_{p-1}G]^p[N_{p-i-1},{}_{p-1}G]\le (M_{i+1}^p)^p[N_{p-i-1},{}_{p-1}G]=[N_{p-i-1},{}_{p-1}G]\le N_{-i}=1$, as required.

    Since $M_{p+k}=M_{p-1}^{p^{k+1}}$ for all $k\ge 0$, we quickly conclude that the intersection of all $M_i$ is trivial. This finishes the proof.
\end{proof}


\begin{corollary}
    \label{c:smallpwc}
    Every finitely generated pro-$p$ group of small powerful class is a PF-group.
\end{corollary}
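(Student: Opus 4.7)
The statement follows almost immediately from Proposition \ref{p:smallPF} once one observes the correct relationship between powerful class and powerful height. My plan is to view $G$ as a normal subgroup of itself and apply the preceding proposition.

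More precisely, suppose $G$ is a finitely generated pro-$p$ group with $\pwc(G) = k < p$. By definition of the powerful class, there is a chain $1 = \eta_0(G) \le \eta_1(G) \le \cdots \le \eta_k(G) = G$ which is an $\eta$-series of $G$ in itself (each quotient $\eta_{i+1}/\eta_i$ is powerfully embedded in $G/\eta_i$). Hence, viewing $G$ as a closed normal subgroup of itself, this chain witnesses that $\pwh(G) \le k < p$, so $G$ has small powerful height.

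Applying Proposition \ref{p:smallPF} with $N = G$, we conclude that $G$ is PF-embedded in $G$, which is exactly the statement that $G$ is a PF-group. No extra work is required; the one small thing to remark on is why $\pwh(G) \le \pwc(G)$, and this is simply because the upper $\eta$-series itself is an $\eta$-series terminating at $G$ of length $\pwc(G)$. There is no genuine obstacle here beyond parsing the definitions in the right order.
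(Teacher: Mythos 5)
Your argument is correct and is precisely the intended deduction: the paper states this as an immediate corollary of Proposition \ref{p:smallPF} (with no written proof), obtained by taking $N=G$ and noting that the upper $\eta$-series witnesses $\pwh(G)\le\pwc(G)<p$. Nothing further is needed.
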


Every torsion-free pro-$p$ group of small powerful class is therefore $p$-saturable in the sense of Lazard \cite{Laz65}. The latter have a natural $\mathbb{Z}_p$-lattice structure, first discovered by Lazard ({\it op. cit.}) and further developed by Gonz\'{a}lez-S\'{a}nchez \cite{GS07}. If $G$ is a $p$-saturable group, then the following operations turn it into a $p$-saturable Lie algebra $\mathcal{G}=G$:
\begin{align*}
    x+y &=\lim_{n\to\infty}(x^{p^n}y^{p^n})^{p^{-n}},\\
    \lambda x &= x^\lambda,\\
    [x,y]_{\rm Lie} &= \lim_{n\to\infty}[x^{p^n},y^{p^n}]^{p^{-2n}}.
\end{align*}
Conversely, every $p$-saturable Lie algebra becomes a $p$-saturable group with multiplication given via the Baker--Campbell--Hausdorff formula
$$\Phi(x,y)=\log(\exp x\cdot \exp y)=x+y+\sum_{i=2}^\infty u_i(x,y),$$
where $u_i(x,y)$ are Lie polynomials in $x$ and $y$ of degree $i$ with coefficients in $\mathbb{Q}$, see \cite[Theorem 6.28]{DdSMS99} for further details.

If $\mathcal{L}$ is a $\mathbb{Z}_p$-Lie algebra, then a subalgebra $\mathcal{K}$ is {\it powerfully embedded} in $\mathbb{L}$ if $[\mathcal{K},\mathcal{L}]_{\rm Lie}\le p\mathcal{K}$. Analogously, one extends the notion of PF-embedded subgroups to PF-embedded Lie subalgebras \cite{GS07}. Furthermore, we can define the powerful class for $\mathbb{Z}_p$-Lie algebras as follows. A series $0=\mathcal{L}_0\le\mathcal{L}_1\le\cdots$ of ideals of a $\mathbb{Z}_p$-Lie algebra $\mathcal{L}$ is an {\it $\eta$-series} if $\mathcal{L}_{i+1}/\mathcal{L}_i$ is powerfully embedded in $\mathcal{L}/\mathcal{L}_i$ for all $i$. If there is an $\eta$-series of $\mathcal{L}$ that reaches $\mathcal{L}$ in finitely many steps, we say that $\mathcal{L}$ has {\it finite powerful class}. In this case, the length of shortest $\eta$-series of $\mathcal{L}$ is called the {\it powerful class} $\pwc(\mathcal{L})$ of $\mathcal{L}$. Denote by $\eta(\mathcal{L})$ the sum of all powerfully embedded ideals in $\mathcal{L}$. Then we can define the upper $\eta$-series of a Lie algebra exactly the same as in the group case. It is also clear that the upper $\eta$-series is the fastest growing $\eta$-series of the Lie algebra $\mathcal{L}$. 

\begin{corollary}
    \label{c:smallLie}
    A finitely generated torsion-free pro-$p$ group $G$ has small powerful class if and only if the corresponding Lie algebra $\mathcal{G}$ has small powerful class. In this case, $\pwc (G)=\pwc(\mathcal{G})$ and $\eta_i(G)=\eta_i(\mathcal{G})$ for all $i\ge 0$.
\end{corollary}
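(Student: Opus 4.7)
The plan is to set up a dictionary between closed normal subgroups of $G$ and closed ideals of $\mathcal{G}$ under which ``powerfully embedded'' is the same notion on both sides, and then transport the $\eta$-series across the dictionary. For the forward implication, if $\pwc(G)<p$ then Corollary \ref{c:smallpwc} gives that $G$ is PF, and since $G$ is torsion-free, the theorem of González-Sánchez applies, so $G$ is $p$-saturable and $\mathcal{G}$ exists on the same underlying set. For the converse, one works under the implicit hypothesis that $\mathcal{G}$ is defined, i.e.\ that $G$ is $p$-saturable, and translates a Lie-algebraic $\eta$-series back into a group-theoretic one.

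The core of the argument is a single compatibility claim: for every closed normal subgroup $N\le G$, which in $p$-saturable $G$ is simultaneously a closed ideal $\mathcal{N}\le\mathcal{G}$, one has $N^p=p\mathcal{N}$ and
$$[N,G]\le N^p\ \Longleftrightarrow\ [\mathcal{N},\mathcal{G}]_{\rm Lie}\le p\mathcal{N}.$$
The identity $N^p=p\mathcal{N}$ is immediate from $x^p=px$ in the saturable setting, combined with Proposition \ref{p:mann}(1) to identify $N^p$ with $\{x^p:x\in N\}$. For the equivalence of the two commutator estimates, one direction is essentially free from the defining limit
$[x,y]_{\rm Lie}=\lim_n[x^{p^n},y^{p^n}]^{p^{-2n}}$: a bound $[N,G]\le N^p$ lifts to $[x^{p^n},g^{p^n}]\in N^{p^{n+1}}=p^{n+1}\mathcal{N}$ using Proposition \ref{p:mann}(3), and scaling by $p^{-2n}$ yields $[x,g]_{\rm Lie}\in p\mathcal{N}$. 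The reverse direction uses the BCH expansion of the group commutator as a convergent sum of Lie monomials in $x,y$ of increasing Lie degree, and observes that once $[\mathcal{N},\mathcal{G}]_{\rm Lie}\le p\mathcal{N}$, every higher nested bracket $[\cdots[[\mathcal{N},\mathcal{G}]_{\rm Lie},\mathcal{G}]_{\rm Lie},\ldots]_{\rm Lie}$ of length $<p$ is again in $p\mathcal{N}$, while the BCH denominators are $p$-integral for brackets of length $<p$; the tail, of length $\ge p$, is absorbed into $p\mathcal{N}$ by the PF-filtration from Proposition \ref{p:smallPF}.

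With this compatibility established, the upper $\eta$-series on both sides are built by parallel recursions. Assuming inductively $\eta_i(G)=\eta_i(\mathcal{G})$, applying the equivalence to the quotient $G/\eta_i=\mathcal{G}/\eta_i(\mathcal{G})$ (which is again torsion-free $p$-saturable of small powerful class, by Lemma \ref{l:etaseries}(5)) shows that the powerfully embedded normal subgroups of $G/\eta_i$ coincide with the powerfully embedded ideals of $\mathcal{G}/\eta_i(\mathcal{G})$. Taking their product yields $\eta_{i+1}(G)=\eta_{i+1}(\mathcal{G})$, and hence $\pwc(G)=\pwc(\mathcal{G})$, proving both halves of the biconditional.

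The main obstacle is the reverse half of the compatibility claim in the middle paragraph: one has to check carefully that the BCH polynomials in the expansion of the group commutator do not introduce denominators divisible by $p$ until degree $p$, and that the degree-$\ge p$ tail is controlled. Both points are exactly where the hypothesis $\pwc<p$ is used in an essential way, via the potent filtration of Proposition \ref{p:smallPF} and the fact that Proposition \ref{p:mann}(3) lets one commute $p$-th powers past commutators without loss.
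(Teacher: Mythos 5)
Your overall strategy coincides with the paper's: both proofs transport an $\eta$-series across the group--Lie correspondence for $p$-saturable groups, using Proposition \ref{p:smallPF} to know that the terms of the series are PF-embedded. The difference is that the paper obtains the dictionary (equality of group commutator with Lie bracket, and of $N^p$ with $p\mathcal{N}$, for PF-embedded subgroups) by citing \cite[Theorem 4.5]{GS07}, whereas you try to reprove it ad hoc, and this is where your argument has genuine gaps.

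First, your compatibility claim is asserted for \emph{every} closed normal subgroup $N$ of $G$, ``which in $p$-saturable $G$ is simultaneously a closed ideal.'' That is false: a closed normal subgroup need not be closed under the addition $x+y=\lim_n(x^{p^n}y^{p^n})^{p^{-n}}$, since extracting $p^n$-th roots can leave $N$ (the subgroup need not be isolated). The correspondence between subgroups and ideals holds only for PF-embedded objects --- this is precisely why the paper first invokes Proposition \ref{p:smallPF} and only then appeals to \cite[Theorem 4.5]{GS07}. Second, your ``essentially free'' direction of the equivalence is quantitatively wrong: from $[N,G]\le N^p$ and Proposition \ref{p:mann}(3) you get $[x^{p^n},g^{p^n}]\in[N^{p^n},G]=[N,G]^{p^n}\le N^{p^{n+1}}=p^{n+1}\mathcal{N}$, but the Lie bracket is normalized by $p^{-2n}$, so this estimate only places $[x,g]_{\rm Lie}$ in $p^{1-n}\mathcal{N}$, which is vacuous as $n\to\infty$. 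To match the $p^{-2n}$ one must extract a factor $p^n$ from \emph{each} argument of the commutator, i.e.\ control $[N^{p^n},G^{p^n}]$ down to $N^{p^{2n+1}}$, and Proposition \ref{p:mann}(3) as stated gives the inequality $[N,G]^{p^k}\le[N,G^{p^k}]$ in the unhelpful direction. Together with the BCH direction (where your remark about $p$-integrality of the $u_i$ for $i<p$ is correct but the treatment of the degree $\ge p$ tail is only gestured at), what you are attempting to prove in the middle paragraph is exactly the content of \cite[Theorem 4.5]{GS07}; citing it, as the paper does, closes all of these gaps at once. A smaller issue: in your inductive step you assert that $G/\eta_i(G)$ is again torsion-free $p$-saturable, which requires an argument (quotients of torsion-free groups need not be torsion-free); the paper sidesteps this by translating a single fixed $\eta$-series rather than inducting on quotients.
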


\begin{proof}
    Suppose $G$ has small powerful class $k$. The group $G$ is $p$-saturable, therefore the corresponding Lie algebra $\mathcal{G}$ is $p$-saturable \cite[Theorem 4.2]{GS07}. Let $1=N_0\le N_1\le\cdots \le N_k=G$ be an $\eta$-series of $G$ with $k<p$. Then all subgroups $N_i$ are PF-embedded in $G$ by Proposition \ref{p:smallPF}. By
    \cite[Theorem 4.5]{GS07}, we have a corresponding series of PF-embedded ideals of $\mathcal{G}$ given as $0=\mathcal{N}_0\le\mathcal{N}_1\le\cdots\le \mathcal{N}_k=\mathcal{G}$, and
    $$[\mathcal{N}_{i+1},\mathcal{G}]_{\rm Lie}=[N_{i+1},G]\le N_{i+1}^pN_i=p\mathcal{N}_{i+1}+\mathcal{N}_i$$
    for all $i$. This shows that $(\mathcal{N}_i)_i$ is an $\eta$-series of $\mathcal{G}$, hence $\pwc(\mathcal{G})\le k$.

    The converse follows from the fact that if $(\mathcal{N}_i)_i$ is an $\eta$-series of $\mathcal{G}$, then an analogous argument as in the proof of Proposition \ref{p:smallPF} shows that all $\mathcal{N}_i$ are PF-embedded in $\mathcal{G}$. Then the argument proceeds along the similar lines as in the previous paragraph. 

    The equality of the upper $\eta$-series of $G$ and $\mathcal{G}$ now follows from \cite[Theorem 4.5]{GS07}.
\end{proof}

Kazhdan \cite{Kaz77} showed that Kirillov's orbit method 
provides a correspondence between the irreducible characters of finite $p$-groups of class $<p$ and the orbits of the action of that group on the dual space of the corresponding Lie algebra. 
In \cite{GS09}, Gonz\' alez-S\' anchez showed that the orbit method also works for some classes of $p$-saturable groups, such as torsion-free potent groups.
However, the orbit method no longer works for $p$-saturable groups of small powerful class:

\begin{example}[Example 1 of \cite{GS09}]
    \label{ex:kirillov}
    Let $M=\langle x_1,x_2\ldots ,x_p\rangle\cong\mathbb{Z}_p^p$ and form $G=\langle \alpha\rangle\ltimes M\cong \mathbb{Z}_p\ltimes\mathbb{Z}_p^p$, where the action of $\alpha$ on $M$ is given by $[x_i,\alpha]=x_{i+1}$ for $i\le p-2$, and $[x_{p-1}, \alpha]=x_p^p$ and $[x_p,\alpha]=1$. Then we readily get that
    $$\eta_i(G)=\langle \alpha^{p^{p-i-1}},x_1^{p^{k_{i1}}},x_2^{p^{k_{i2}}},\ldots ,x_{p-2}^{p^{k_{i,p-2}}},x_{p-1},x_p\rangle,$$
    where $k_{ij}=\max\{p-i-j,0\}$, hence $\pwc(G)=p-1$. The group $G$ therefore has small powerful class, yet the orbit method does not yield all of its irreducible representations \cite{GS09}.
\end{example}

Pro-$p$ groups whose powerful class is not small may not be PF-groups, as the following example shows:

\begin{example}
    \label{ex:pwcnotPF}
    We exhibit a finite $p$-group of powerful class equal to $p$ that is not a PF-group.
    Let $M$ be an elementary abelian $p$-group with generators $x_1,x_2,\ldots ,x_p$. Form $G=\langle \alpha\rangle \ltimes M$, where $\alpha$ has order $p^2$ and acts on $M$ as follows: $[x_i,\alpha]=x_{i+1}$ for $i=1,2,\ldots ,p-1$, and $[x_p,\alpha]=1$. The group $G$ has order $p^{p+2}$ and nilpotency class $p$. Note that
    $$(\alpha x_1)^p=\alpha^px_1^px_2^{{p\choose 2}}x_3^{{p\choose 3}}\cdots x_p^{{p\choose p}}=\alpha^px_p,$$
    therefore $x_p\in G^p$. On the other hand, $x_p$ is not a $p$-th power of some element of $G$. This shows that $G$ is not a PF-group by \cite[Theorem 3.4]{FGJ08}. By Corollary \ref{c:smallpwc} we must have that $\pwc (G)=p$.
\end{example}

On the other hand, there are PF-groups, even torsion-free and potent, which do not have finite powerful class:

\begin{example}
    \label{ex:potentnopwc}
    In the following we construct a finitely generated torsion-free potent pro-$p$ group $G$, which does not have finite powerful class. Let $p>3$ and let $n$ be a positive integer. Let $G_n=\langle \alpha\rangle\ltimes M$, where $M=\langle x_1,x_2,\ldots ,x_{p-2}\rangle$ is an abelian group, and $|x_1|=p^{n+1}$ and $|x_2|=|x_3|=\cdots =|x_{p-2}|=|\alpha|=p^n$. The action of $\alpha$ on $M$ is given by $[x_i,\alpha]=x_{i+1}$ for $i=1,2,\ldots ,p-3$, and $[x_{p-2},\alpha]=x_1^p$. We clearly have that $\gamma_{p-1}(G_n)\le G_n^p$, hence $G_n$ is a two-generator potent $p$-group. From the relations it follows that $Z(G_n)=\langle x_1^{p^n}\rangle$. One can easily verify that this is the largest powerfully embedded subgroup of $G_n$, therefore $\eta(G_n)=Z(G_n)$. By taking successive quotients, one can see that $\eta_i(G_n)=Z_i(G_n)$ for all $i\ge 1$. As the nilpotency class of $G_n$ is precisely $n(p-2)+1$, we conclude that $\pwc G_n=n(p-2)+1$.

    The groups $G_n$ clearly form an inverse system. Their inverse limit $G\cong \mathbb{Z}_p\ltimes \mathbb{Z}_p^{p-2}$ is topologically generated by two generators, it is torsion-free and potent. As $\pwc (G_n)$ are not bounded, the group $G$ does not have finite powerful class.
\end{example}

\section{Elements of finite order and powerful class}
\label{s:orderfin}

\noindent
In this section we look at the elements of finite order in pro-$p$ groups of finite powerful class $k$. For $i\ge 0$, denote
$\Omega_i(G)=\langle x\in G\mid x^{p^i}=1\rangle$.  At first we bound the exponent of $\Omega_i(G)$ in terms of $i$ and $k$:

\begin{theorem}
    \label{t:omegai}
    Let $G$ be a finitely generated pro-$p$ group of powerful class $k$. Suppose $k\le \ell (p-1)$. Then $\Omega_i(G)^{p^{i+\ell}}=1$.
\end{theorem}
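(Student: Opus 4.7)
The plan is to reduce to finite $p$-groups and then induct on $\ell$, stripping $p-1$ layers of the upper $\eta$-series at each step. A finitely generated pro-$p$ group of powerful class $k$ is an inverse limit of finite $p$-groups of powerful class at most $k$, and the identity $\Omega_i(G)^{p^{i+\ell}}=1$ passes to inverse limits, so I would assume $G$ is finite.

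For the base case $\ell=1$ we have $\pwc(G)\le p-1$, so by Corollary~\ref{c:smallpwc} the group $G$ is a PF-group; then the main theorem of~\cite{FGJ08} identifies $\Omega_i(G)$ with $\{g\in G:g^{p^i}=1\}$, yielding the stronger conclusion $\Omega_i(G)^{p^i}=1$. For the inductive step with $\ell\ge 2$ I would set $N=\eta_{p-1}(G)$; parts~(3) and~(5) of Lemma~\ref{l:etaseries} give $\pwc(N)\le p-1$ (so $N$ is PF) and $\pwc(G/N)\le k-(p-1)\le(\ell-1)(p-1)$. The inductive hypothesis for $G/N$ gives $\Omega_i(G/N)^{p^{i+\ell-1}}=1$; since every generator of $\Omega_i(G)$ descends to an element of order at most $p^i$ in $G/N$, one has $\Omega_i(G)N/N\le\Omega_i(G/N)$, forcing $\Omega_i(G)^{p^{i+\ell-1}}\le N$.

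To finish the induction I would show that for every $z\in\Omega_i(G)$ the element $w=z^{p^{i+\ell-1}}\in N$ satisfies $w^p=1$; since $N$ is PF, this is the same as $w\in\Omega_1(N)$. Writing $z=g_1\cdots g_r$ with $g_j^{p^i}=1$ and applying the Hall--Petrescu formula to expand $z^{p^{i+\ell-1}}$, the principal terms $g_j^{p^{i+\ell-1}}$ all vanish and one is left with a product of commutators $c_s^{\binom{p^{i+\ell-1}}{s}}$ with $c_s\in\gamma_s(\langle g_1,\ldots,g_r\rangle)$. I would control these commutator tails using the PF-embeddedness of $N$ in $G$ (Proposition~\ref{p:smallPF}) together with the identity $[N^{p^j},G]=[N,G]^{p^j}$ from Proposition~\ref{p:mann}(3), which lets one trade commutator depth inside $N$ against the $p$-adic valuation of the binomial coefficients.

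The main obstacle is precisely this last bookkeeping step: one must balance commutator weight against the $p$-adic valuation of $\binom{p^{i+\ell-1}}{s}$ and against the depth of the PF-filtration of $N$, and verify that the numerical hypothesis $k\le\ell(p-1)$ supplies exactly the budget of one additional factor of $p$ needed to annihilate every surviving commutator inside the PF-group $N$.
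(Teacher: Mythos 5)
Your reduction to finite groups, the base case $\ell=1$, and the first half of the inductive step (passing to $G/\eta_{p-1}(G)$ to get $\Omega_i(G)^{p^{i+\ell-1}}\le\eta_{p-1}(G)$) are all sound. But the argument has a genuine gap exactly where you flag the ``main obstacle'': the claim that $w=z^{p^{i+\ell-1}}$ satisfies $w^p=1$ is the entire content of the theorem at that point, and the Hall--Petrescu bookkeeping you sketch does not obviously close. Two concrete problems: first, the commutator terms $c_s$ in the expansion of $(g_1\cdots g_r)^{p^{i+\ell-1}}$ lie in $\gamma_s(\langle g_1,\ldots,g_r\rangle)$, which for small $s$ is not contained in $N=\eta_{p-1}(G)$ at all, so the PF-filtration of $N$ and Proposition \ref{p:mann}(3) give you no handle on them; you would essentially have to reprove the delicate commutator analysis of \cite{FGJ08} from scratch. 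Second, knowing $w\in N$ with $N$ PF-embedded does not bound the order of $w$ (PF-embedded subgroups can have arbitrarily large exponent), and reducing to ``$w\in\Omega_1(N)$'' is not a reduction: exhibiting $w$ as a product of elements of order $p$ is no easier than showing $w^p=1$ directly.

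The paper avoids this entirely by inducting on a \emph{structural} statement rather than on the exponent bound: it shows by induction on $\ell$ that $\gamma_{\ell(p-1)}(G)$ is contained in a PF-embedded subgroup of $G$. The inductive step constructs this subgroup explicitly as $N^p\eta_{p-1}$, splicing the $p$-th powers of a potent filtration of $N/\eta_{p-1}$ (where $N/\eta_{p-1}$ is the PF-embedded subgroup supplied by induction in $G/\eta_{p-1}$) onto a potent filtration of $\eta_{p-1}$, and verifying the potent filtration axioms for the combined series. The exponent bound $\Omega_i(G)^{p^{i+\ell}}=1$ then follows in one stroke from \cite[Theorem 4.1]{FGJ08}, which is precisely the result designed to convert ``$\gamma_{\ell(p-1)}(G)$ lies in a PF-embedded subgroup'' into a bound on $\exp\Omega_i(G)$. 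If you want to salvage your outline, the cleanest fix is to replace your final paragraph with this strategy: prove the containment of $\gamma_{\ell(p-1)}(G)$ in a PF-embedded subgroup and cite \cite{FGJ08} rather than attempting the Hall--Petrescu computation yourself.
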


\begin{proof}
    We may assume that $G$ is finite.
    We prove by induction on $\ell$ that $\gamma_{\ell(p-1)}(G)$ is contained in some PF-embedded subgroup of $G$.  If $\ell =1$, then $G$ has small powerful class, therefore it is a PF-group by Corollary \ref{c:smallpwc}. For induction step, note that $G/\eta_{p-1}$ has powerful class $\le (\ell -1)(p-1)$. Therefore there exists a normal subgroup $N$ of $G$ such that $N/\eta_{p-1}$ is PF-embedded in $G/\eta_{p-1}$ and $\gamma_{(\ell-1)(p-1)}(G)\le N$. Choose a potent filtration
    $N/\eta_{p-1}=N_1/\eta_{p-1}\ge N_2/\eta_{p-1}\ge\cdots\ge N_r/\eta_{p-1}=1$ of $N/\eta_{p-1}$ in $G/\eta_{p-1}$. Then we have $[N_i,G]\le N_{i+1}$ and $[N_i,{}_{p-1}G]\le N_{i+1}^p\eta_{p-1}$ for all $i\ge 1$. The group $\eta_{p-1}$ is PF-embedded in $G$ by Proposition \ref{p:smallPF}. Choose a potent filtration $\eta_{p-1}=M_1\ge M_2\ge\cdots\ge M_s=1$ of $\eta_{p-1}$ in $G$. We claim that
    $$N^p\eta_{p-1}=N_1^p\eta_{p-1}\ge N_2^p\eta_{p-1}\ge\cdots\ge N_r^p\eta_{p-1}=\eta_{p-1}\ge M_2\ge\cdots\ge M_s=1$$
    is a potent filtration of $N^p\eta_{p-1}$ in $G$. The series is central, since
    $[N_i^p\eta_{p-1},G]\le [N_i,G]^p[N_i,{}_pG]\eta_{p-1}\le N_{i+1}^p\eta_{p-1}$. The proof will be concluded once we have shown that $[N_i^p\eta_{p-1},{}_{p-1}G]\le (N_{i+1}^p\eta_{p-1})^p$. To this end, we may assume that $(N_{i+1}^p\eta_{p-1})^p=1$ and $[N_i^p\eta_{p-1},{}_{j}G]=1$ for all $j\ge p$. At first note that $[\eta_{p-1},{}_{p-1}G]\le \eta_{p-1}^p=1$. We thus have that
    $[N_i^p\eta_{p-1},{}_{p-1}G]=[N_i^p,{}_{p-1}G]=[N_i,{}_{p-1}G]^p\le (N_{i+1}^p\eta_{p-1})^p=1$. This proves the claim.  We have therefore shown that $N^p\eta_{p-1}$ is PF-embedded in $G$. Observe that
    $\gamma_{\ell(p-1)}(G)\le [N,{}_{p-1}G]\le N_2^p\eta_{p-1}\le N^p\eta_{p-1}$.

    Our result now directly follows from \cite[Theorem 4.1]{FGJ08}.
\end{proof}

We note here that Mann \cite{Man11} constucted a finite $p$-group $G$ of small powerful class with $\exp\Omega_1(G)>p$, therefore the bound given in Theorem \ref{t:omegai} is close to being sharp. The bound can also be compared with Eeasterfield's bound for the exponent of $\Omega_i(G)$ in terms of $p$, $i$ and the nilpotency class of the group $G$, {\it cf}. \cite{FGJ08}.

An immediate consequence is the following:

\begin{corollary}
    \label{l:torsion}
    Let $G$ be a finitely generated pro-$p$ group of finite powerful class. Then the set of all torsion elements of $G$ forms a finite subgroup of $G$.
\end{corollary}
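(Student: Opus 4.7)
The plan is to combine Theorem~\ref{t:omegai} with the $p$-adic analytic structure provided by Proposition~\ref{p:padic}. Fix $\ell$ with $\pwc(G)\le \ell(p-1)$, so Theorem~\ref{t:omegai} gives $\Omega_i(G)^{p^{i+\ell}}=1$ for every $i\ge 0$. To conclude that the set $T$ of torsion elements of $G$ is a finite subgroup, it suffices to show that $T$ coincides with a single $\Omega_m(G)$ and that this $\Omega_m(G)$ is finite.

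For the first point, I would use Proposition~\ref{p:padic} to see that $G$ is $p$-adic analytic and therefore contains an open normal torsion-free (uniform) subgroup $U$, with $[G:U]=p^m$ for some $m\ge 0$. If $x\in G$ is a torsion element, then $(xU)^{p^m}=U$ forces $x^{p^m}\in U$; since $U$ is torsion-free, $x^{p^m}=1$, and hence $x\in\Omega_m(G)$. Conversely, $\Omega_m(G)$ has exponent dividing $p^{m+\ell}$ by Theorem~\ref{t:omegai}, so every element of $\Omega_m(G)$ is torsion. Thus $T=\Omega_m(G)$ is a subgroup of $G$.

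It remains to check that $\Omega_m(G)$ is finite. Being a closed subgroup of the finitely generated $p$-adic analytic group $G$, it is itself finitely generated $p$-adic analytic, hence admits an open uniform subgroup $V$. But $V$ is simultaneously torsion-free and of exponent dividing $p^{m+\ell}$, so $V=1$. Consequently $\Omega_m(G)$ is finite, and the corollary follows.

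The main obstacle is the passage from the pointwise exponent bound supplied by Theorem~\ref{t:omegai} to the finiteness of $\Omega_m(G)$ as a group; it is precisely the $p$-adic analytic structure furnished by Proposition~\ref{p:padic} that rules out an infinite closed pro-$p$ subgroup of bounded exponent and, at the same time, supplies the torsion-free open subgroup needed to force every torsion element into a single $\Omega_m(G)$. Everything else is straightforward bookkeeping.
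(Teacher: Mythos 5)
Your argument is correct and is exactly the route the paper intends: it states the corollary as an immediate consequence of Theorem~\ref{t:omegai} together with the $p$-adic analyticity from Proposition~\ref{p:padic}, and your write-up simply supplies the standard details (an open normal torsion-free uniform subgroup $U$ forces every torsion element into $\Omega_m(G)$ for $p^m=[G:U]$, and the exponent bound plus torsion-freeness of $U$ gives $\Omega_m(G)\cap U=1$, hence finiteness). No gaps.
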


\section{Powerful class and coclass}
\label{s:coclass}

\noindent
If $G$ is a finite $p$-group of class $c$ and order $p^n$, then $c<n$. The number $r=n-c$ is called the {\it coclass} of $G$. Determining the structure of finite $p$-groups according to coclass has been very fruitful. We refer to \cite{LGM02}.

One of the important features of large $p$-groups of given coclass is that they act uniserially on certain parts of their lower central series by conjugation. Recall that a finite $p$-group $G$ acts {\it uniserially} on a finite $p$-group $N$ if $|H:[H,G]|=p$ for every non-trivial $G$-invariant subgroup $H$ of $N$. The following result due to Shalev is one of the fundamental results of the coclass theory:

\begin{lemma}[\cite{LGM02}, Theorem 6.3.9]
    \label{l:coclass}
Suppose $p>2$.
Let $G$ be a finite $p$-group of coclass $r$ and $|G|=p^n\ge p^{2p^r+r}$. Let $m=p^r-p^{r-1}$. Then there exists $0\le s\le r-1$ such that $G$ acts uniserially on $\gamma_m(G)$, and $\gamma_i(G)^p=\gamma_{i+d}(G)$ for all $i\ge m$, where $d=(p-1)p^s$. 
\end{lemma}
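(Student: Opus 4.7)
The plan is to reduce the statement to an analysis of infinite pro-$p$ groups of coclass $r$, and then invoke the structure theory developed in coclass theory. By the coclass conjectures (proven in their entirety for $p>2$; see \cite{LGM02}), every infinite pro-$p$ group $W$ of coclass $r$ is virtually abelian, and more precisely it admits a characteristic open normal subgroup $T$ isomorphic as a $\Z_p$-module to $\Z_p[\zeta_{p^r}]\cong\Z_p^{\,p^r-p^{r-1}}$, on which the finite $p$-group $W/T$ acts faithfully through a subgroup of $\mathrm{Gal}(\Q_p(\zeta_{p^r})/\Q_p)$.

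The hypothesis $|G|\ge p^{2p^r+r}$ should be the threshold guaranteeing that $G$ lies on a ``mainline'' branch of the coclass tree whose inverse limit is such a $W$. I would construct $W$ by following the parent chain $G\twoheadrightarrow G/\gamma_c(G)\twoheadrightarrow G/\gamma_{c-1}(G)\twoheadrightarrow\cdots$ and extracting an inverse limit of coclass $r$. Once $G$ is deep enough in the tree, the subgroup $\gamma_m(G)$ with $m=p^r-p^{r-1}$ should pull back via the quotient $W\twoheadrightarrow G$ to a finite-index subgroup of $T$; any conclusion about the uniserial $\Z_p[W/T]$-module structure of $T$ then transfers directly to $G$ acting on $\gamma_m(G)$ by conjugation.

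For the power law, I would analyse the action of a generator $\sigma$ of the cyclic component of maximal $p$-order in $W/T$: its minimal polynomial over $\Q_p$ is the cyclotomic polynomial $\Phi_{p^r}(x)$, so the descending chain $T\supset(\sigma-1)T\supset(\sigma-1)^2T\supset\cdots$ coincides with the uniserial filtration, and a standard ramification computation in $\Z_p[\zeta_{p^r}]$ gives $(\sigma-1)^{(p-1)p^s}T=pT$, with $0\le s\le r-1$ determined by the order of $W/T$ inside the Galois group. Translating back through the commutator-length correspondence $\gamma_{m+j}(G)\leftrightarrow(\sigma-1)^j T$ then yields $\gamma_i(G)^p=\gamma_{i+d}(G)$ for all $i\ge m$, with $d=(p-1)p^s$.

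The chief obstacle will be the coclass conjectures themselves: virtual abelianness of pro-$p$ groups of finite coclass, the precise rank $p^r-p^{r-1}$ of the translation subgroup, and the cyclotomic nature of the point-group action all require substantial input from the theory of space groups over $\Z_p$ and from the arithmetic of $p$-adic cyclotomic fields. The remaining work---quantifying exactly how far down the coclass tree one must descend before the lower central series settles into its ultimate uniserial pattern---is essentially bookkeeping, and should be what produces the explicit bound $p^{2p^r+r}$.
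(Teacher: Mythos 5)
This lemma is not proved in the paper at all: it is Shalev's theorem, quoted verbatim as \cite[Theorem 6.3.9]{LGM02}, so there is no internal argument to compare yours against. Judged on its own, your sketch takes the ``top-down'' route through the classification of infinite pro-$p$ groups of finite coclass, and it has two problems. First, a factual one: the translation subgroup $T$ of an infinite soluble pro-$p$ group of coclass $r$ is a uniserial $\Z_p$-lattice of rank $(p-1)p^s$ for \emph{some} $0\le s\le r-1$; it is not in general $\Z_p[\zeta_{p^r}]$ of rank $p^r-p^{r-1}$. The existential quantifier over $s$ in the statement, and the value $d=(p-1)p^s$, record exactly this varying rank (uniseriality forces $[T,{}_d\,W]=pT$ with $d=\mathrm{rank}\,T$), so pinning $T$ to the maximal cyclotomic lattice breaks the very mechanism you invoke for the power law $\gamma_i(G)^p=\gamma_{i+d}(G)$.

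Second, and more seriously, the reduction from the finite statement to the pro-$p$ one is not ``bookkeeping''. A finite $p$-group of coclass $r$ with $|G|\ge p^{2p^r+r}$ need not be a quotient of an infinite pro-$p$ group of coclass $r$ (the branches of the coclass graph are not single vertices), so properties of $T$ do not ``transfer directly'' to the conjugation action on $\gamma_m(G)$; and extracting the explicit threshold $2p^r+r$ from the classification of the infinite groups is essentially as hard as the theorem itself. Historically the logic runs the other way: Shalev's effective proof (the one reproduced in Chapter 6 of \cite{LGM02}) argues directly on the finite group, via Lie-ring methods and the action of $G$ on sections of its lower central series, and uniseriality statements of this kind are then \emph{inputs} to the structure theory of the coclass graph and of the infinite pro-$p$ groups. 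As written, your argument is circular at worst and ineffective at best; the correct move here is the one the paper makes, namely to cite the result.
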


\begin{theorem}
    \label{t:pwccoclass}
    Given $p$, $r$ and $k$, there are only finitely many finite $p$-groups of coclass $r$ and powerful class at most $k$.
\end{theorem}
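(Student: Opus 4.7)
The plan is to bound $|G|=p^n$ by a function of $p$, $r$, and $k$. Below the Shalev threshold $|G|<p^{2p^r+r}$ only finitely many $p$-groups of coclass $r$ exist, so I would assume $|G|\ge p^{2p^r+r}$ and invoke Lemma \ref{l:coclass}: $G$ acts uniserially on $\gamma_m(G)$ where $m=p^r-p^{r-1}$, and $\gamma_i(G)^p=\gamma_{i+d}(G)$ for all $i\ge m$, with $d=(p-1)p^s$ and $0\le s\le r-1$. Uniserial action forces every $G$-invariant subgroup of $\gamma_m(G)$ to equal $\gamma_j(G)$ for some $j$; writing $\eta_i(G)\cap\gamma_m(G)=\gamma_{\ell_i}(G)$, one obtains a non-increasing sequence $c+1=\ell_0\ge \ell_1\ge\cdots\ge\ell_k=m$, where $c=n-r$. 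The strategy is to bound each jump $t_i:=\ell_{i-1}-\ell_i$ by a constant depending only on $p$ and $r$; summing then bounds $c+1-m$ and hence $n$.

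The first key step is the inclusion $[\eta_i,{}_jG]\le\eta_i^{p^j}\eta_{i-1}$ for every $j\ge 0$. This is obtained by induction on $j$, using that $\eta_i/\eta_{i-1}=\eta(G/\eta_{i-1})$ is powerfully embedded in $G/\eta_{i-1}$, together with the standard fact that powers of powerfully embedded subgroups are again powerfully embedded. Applying this to $\gamma_{\ell_i}\le\eta_i$ and using uniserial action gives $\gamma_{\ell_i+j}=[\gamma_{\ell_i},{}_jG]\le\eta_i^{p^j}\eta_{i-1}$. Choosing $j=\log_p\exp(\eta_i/\eta_{i-1})$ makes $\eta_i^{p^j}\le\eta_{i-1}$, so $\gamma_{\ell_i+j}\le\eta_{i-1}\cap\gamma_m=\gamma_{\ell_{i-1}}$, which forces $t_i\le\log_p\exp(\eta_i/\eta_{i-1})$.

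The second key step bounds $\exp(\eta_i/\eta_{i-1})$ back in terms of $t_i$, using the coclass structure. Uniserial action yields $|\gamma_m(G)|=p^{c-m+1}$, hence $|G/\gamma_m(G)|=p^{r+m-1}$; consequently every $y\in\eta_i$ satisfies $y^{p^{r+m-1}}\in\gamma_m\cap\eta_i=\gamma_{\ell_i}$. Iterating Shalev's power identity with $a=\lceil t_i/d\rceil$ then places $y^{p^{r+m-1+a}}$ inside $\gamma_{\ell_i+ad}\le\gamma_{\ell_{i-1}}\le\eta_{i-1}$, so $\exp(\eta_i/\eta_{i-1})\le p^{r+m-1+\lceil t_i/d\rceil}$. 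Combining the two inequalities gives $t_i\le r+m-1+\lceil t_i/d\rceil$, which rearranges (using $d\ge p-1\ge 2$) to $t_i\le(r+m)d/(d-1)\le 2(r+m)$. Summing over $i$ yields $c+1-m\le 2k(r+m)$, and hence $n\le 2k(r+m)+m+r-1$, a bound depending only on $p$, $r$, $k$.

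The main obstacle is the apparent circularity between the bound $t_i\le\log_p\exp(\eta_i/\eta_{i-1})$ and the bound on $\exp(\eta_i/\eta_{i-1})$ in terms of $t_i$. The circularity is broken because Shalev's power identity forces the $p$-th power map on $\gamma_m(G)$ to descend by $d\ge p-1$ steps rather than $1$, providing the strict contraction $t_i/d<t_i$ needed to solve the resulting linear inequality; without this coclass-supplied gain the two inequalities would reduce to a tautology, and the argument would fail.
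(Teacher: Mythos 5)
Your argument is correct, but it takes a genuinely different route from the paper's. The paper first invokes Mann's result that a normal subgroup of a group of powerful class $k$ has powerful height at most $k$, takes an $\eta$-series of $\gamma_m(G)$ \emph{in} $G$ of length $\ell\le k$, and uses uniseriality to write each term as $\gamma_{m_i}(G)$; the powerful-embedding condition then reads $\gamma_{1+m_{i+1}}(G)\le\gamma_{m_{i+1}}(G)^p\gamma_{m_i}(G)=\gamma_{d+m_{i+1}}(G)\gamma_{m_i}(G)$, and since $d>1$ this forces every jump to equal $1$, giving at once $|\gamma_m(G)|\le p^k$ and $|G|\le p^{k+r+m-1}$. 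You instead intersect the upper $\eta$-series with $\gamma_m(G)$ (these intersections need not form an $\eta$-series of $\gamma_m(G)$, and you correctly do not claim they do) and run a two-sided squeeze: commutator descent via $[\eta_i,{}_jG]\le\eta_i^{p^j}\eta_{i-1}$ bounds each jump $t_i$ by $\log_p\exp(\eta_i/\eta_{i-1})$, while the exponent of $G/\gamma_m(G)$ together with Shalev's identity $\gamma_i(G)^p=\gamma_{i+d}(G)$ bounds that exponent back by $p^{\,r+m-1+\lceil t_i/d\rceil}$, and the resulting linear inequality is solvable precisely because $d\ge p-1\ge 2$. Both proofs pivot on the same coclass-supplied gain $d>1$; yours avoids Mann's powerful-height lemma at the cost of a weaker bound ($|G|\le p^{2k(r+m)+m+r-1}$ versus the paper's $p^{k+r+m-1}$) and a longer argument, but it is a legitimate and complete alternative, and your closing remark correctly identifies where the circularity is broken.
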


\begin{proof}
    Let $G$ be a finite $p$-group of coclass $r$ and powerful class $k$. Denote $|G|=p^n$ and suppose without loss of generality that $n\ge 2p^r+r$. The nilpotency class of $G$ is equal to $c=n-r\ge 2p^r$. Let $m$ and $d$ be as in Lemma \ref{l:coclass}. We have that $\pwh \gamma_m(G)\le k$ by \cite[Lemma 2.5]{Man11}. Consider an $\eta$-series 
    $1=N_0\lneq N_1\lneq\cdots \lneq N_\ell=\gamma_m(G)$ of $\gamma_m(G)$ in $G$, where
    $\ell\le k$. As $G$ acts uniserially on $\gamma_m(G)$, we have $N_i=\gamma_{m_i}(G)$ for some $m_i\ge m$ with
    $c+1=m_0\gneq m_1\gneq\cdots\gneq m_\ell=m$, see \cite[Lemma 4.1.3]{LGM02}. We thus have that $\gamma_{m_{i+1}}(G)/\gamma_{m_i}(G)$ is powerfully embedded in $G/\gamma_{m_i}(G)$ for all $i$. Therefore we see that $\gamma_{1+m_{i+1}}(G)\le \gamma_{m_{i+1}}(G)^p\gamma_{m_i}(G)=\gamma_{d+m_{i+1}}(G)\gamma_{m_i}(G)$ holds for all $i$. 
  Since $d>1$ and $m_i>m_{i+1}$, this is possible only if $m_i=m_{i+1}+1$. Then the above $\eta$-series of $\gamma_m(G)$ is uniserial, and we thus have that $|\gamma_m(G)|\le p^{k}$. On the other hand, $G/\gamma_m(G)$ has coclass $\le r$ and class $\le m-1$, thus $|G:\gamma_m(G)|\le p^{r+m-1}$. We conclude that $|G|\le p^{k+r+m-1}$, and this finishes the proof.
\end{proof}

\begin{corollary}
    \label{c:infpwccoclass}
    There is no infinite pro-$p$ group of finite coclass and finite powerful class.
\end{corollary}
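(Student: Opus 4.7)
The plan is to deduce this from Theorem~\ref{t:pwccoclass} by showing that an infinite pro-$p$ group of finite coclass produces an infinite family of finite $p$-group quotients of bounded coclass, and then combining this with the bounded powerful class hypothesis to reach a contradiction. Suppose for contradiction that $G$ is an infinite pro-$p$ group of finite coclass $r$ and finite powerful class $k$. By the definition of coclass for pro-$p$ groups, each finite quotient $G/\gamma_i(G)$ has coclass at most $r$, and since $\pwc$ passes to quotients, $\pwc(G/\gamma_i(G))\le k$. Theorem~\ref{t:pwccoclass} then gives a constant $C=C(p,r,k)$ with $|G/\gamma_i(G)|\le C$ for all $i$.

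Next, I would observe that the sequence of indices $[G:\gamma_i(G)]$ is weakly increasing in $i$ and bounded, so it stabilizes: there exists $i_0$ with $H:=\gamma_{i_0}(G)=\gamma_{i_0+1}(G)=\cdots$. Since $\gamma_{i+1}(G)=[\gamma_i(G),G]$, this yields $H=[H,G]$. Moreover, $G$ is topologically finitely generated (its abelianization has bounded rank thanks to the coclass bound), and $H$ is open in $G$, hence $H$ is itself a finitely generated pro-$p$ group.

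The last step is to show that $H=[H,G]$ forces $H=1$ by a Frattini/coinvariants argument. Consider the Frattini quotient $V=H/\Phi(H)$; by finite generation this is a finite elementary abelian $p$-group. Conjugation by $G$ gives a continuous $G$-action on $V$ that factors through a finite $p$-quotient $\bar G$. Passing $H=[H,G]$ to $V$ (using that $[h,g]\Phi(H)=(g-1)\bar h$ in additive notation) yields $V=[V,\bar G]$. However, a finite $p$-group acting on a nonzero finite $\mathbb{F}_p$-module has nonzero coinvariants, so $V=[V,\bar G]$ forces $V=0$. By the Burnside basis theorem for pro-$p$ groups, $H=\Phi(H)$ gives $H=1$, so $G=G/H$ is finite, a contradiction.

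The main obstacle, as I see it, is the final reduction $H=[H,G]\Rightarrow H=1$: one needs $H$ to be finitely generated as a pro-$p$ group (obtained from openness, which in turn requires the bound on the indices $[G:\gamma_i(G)]$) and the standard coinvariants fact for $p$-groups on finite $\mathbb{F}_p$-modules. The rest is just a bookkeeping reduction to the finite $p$-group case already handled by Theorem~\ref{t:pwccoclass}.
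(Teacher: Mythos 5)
Your argument is correct and follows the route the paper intends (the corollary is stated without proof as an immediate consequence of Theorem~\ref{t:pwccoclass}): pass to the finite quotients $G/\gamma_i(G)$, which have coclass $\le r$ and powerful class $\le k$, and conclude their orders are uniformly bounded. Your final Frattini/coinvariants step showing $H=[H,G]\Rightarrow H=1$ is a valid piece of extra care, though under the standard definition of coclass for infinite pro-$p$ groups one already has $\bigcap_i\gamma_i(G)=1$, so the stabilization of the lower central series yields $\gamma_{i_0}(G)=1$ and finiteness of $G$ directly.
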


We mention here an independent result that can be proved along similar lines:

\begin{proposition}
    \label{p“:PFcoclass}
    Given $p$ and $r$, there are only finitely many finite $p$-groups of coclass $r$ that are PF-groups.
\end{proposition}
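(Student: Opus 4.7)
The plan is to induct on $r$, paralleling the uniserial argument of Theorem \ref{t:pwccoclass} with a potent filtration in place of an $\eta$-series. The base case $r=0$ is immediate, since the only $p$-groups of coclass $0$ are the trivial group and $C_p$. For the inductive step, let $G$ be a PF-group of coclass $r$ with $|G|=p^n\ge p^{2p^r+r}$, so that Lemma \ref{l:coclass} applies: $G$ acts uniserially on $\gamma_m(G)$ with $m=p^r-p^{r-1}$, and $\gamma_i(G)^p=\gamma_{i+d}(G)$ for $i\ge m$, where $d=(p-1)p^s\ge p-1$. Fix a potent filtration $G=G_1\gneq G_2\gneq\cdots\gneq G_t=1$ of $G$ in itself.

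The key step is to show that no nontrivial term $G_i$ is contained in $\gamma_m(G)$. Suppose otherwise; then by uniseriality $G_i=\gamma_{m_i}(G)$ and $G_{i+1}=\gamma_{m_{i+1}}(G)$ with $m\le m_i<m_{i+1}$. The central property $[G_i,G]\le G_{i+1}$ gives $m_{i+1}\le m_i+1$, forcing $m_{i+1}=m_i+1$, and the potent property $[G_i,{}_{p-1}G]\le G_{i+1}^p=\gamma_{m_i+1+d}(G)$ then yields $d\le p-2$, contradicting $d\ge p-1$. Hence $G_{t-1}\not\subseteq\gamma_m(G)$, and since $[G_{t-1},G]\le G_t=1$ gives $G_{t-1}\le Z(G)$, we conclude $Z(G)\not\subseteq\gamma_m(G)$.

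Pick $z\in Z(G)\setminus\gamma_m(G)$ of smallest order $p^e$. Minimality of $e$ forces $z^p\in\gamma_m(G)$, and since every $G$-invariant central subgroup of $\gamma_m(G)$ must be of the form $\gamma_j(G)$ with $j\ge c$ by uniseriality, we have $Z(G)\cap\gamma_m(G)=\gamma_c(G)$ of order $p$; thus $e\le 2$. Set $A=\langle z\rangle$, a normal cyclic subgroup of order at most $p^2$. Quotients of PF-groups are PF (the images $G_iA/A$ form a potent filtration), so $G/A$ is a PF-group, and a short computation using $A\cap\gamma_m(G)=\langle z^p\rangle$ shows it has coclass $r-1$: if $|z|=p$ then $A\cap\gamma_c(G)=1$ and the nilpotency class is preserved, while if $|z|=p^2$ then $A\supseteq\gamma_c(G)$ and the class drops by one. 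The inductive hypothesis then bounds $|G/A|$, and so $|G|\le p^2|G/A|$ is bounded.

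The main technical point is the coclass bookkeeping for $G/A$: establishing that the nilpotency class changes by the right amount in each of the two subcases, which hinges on the uniserial description of central elements in $\gamma_m(G)$ provided by Lemma \ref{l:coclass}.
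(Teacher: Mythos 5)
There is a genuine gap at your key step. The inference ``$\gamma_a(G)\le\gamma_b(G)$ forces $a\ge b$'' is valid only when $\gamma_a(G)\neq 1$, and you apply the potent condition to terms of the filtration sitting near the bottom of the lower central series, where both sides of the inclusion can be trivial. Concretely, take $G_i=\gamma_c(G)$ (the last nontrivial term, of order $p$) and $G_{i+1}=1$: then $[G_i,G]=\gamma_{c+1}(G)=1\le G_{i+1}$ and $[G_i,{}_{p-1}G]=1\le G_{i+1}^p$, so both the central and the potent conditions hold vacuously and no contradiction with $d\ge p-1$ arises. Your deduction ``$d\le p-2$'' requires $\gamma_{m_i+p-1}(G)\neq 1$, i.e.\ $m_i\le c-p+1$, which you have not arranged. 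Consequently the claim that no nontrivial term of the filtration lies in $\gamma_m(G)$ is unproved; in particular $G_{t-1}$ may perfectly well equal $\gamma_c(G)\le\gamma_m(G)$, and the pivotal conclusion $Z(G)\not\subseteq\gamma_m(G)$ --- on which your entire inductive reduction to coclass $r-1$ rests --- does not follow. (For groups of maximal class one has $Z(G)=\gamma_c(G)\le\gamma_m(G)$, so this conclusion is not obtainable by soft arguments.)

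The paper sidesteps this by working at the top of $\gamma_m(G)$ rather than at the bottom, where nontriviality is guaranteed: since $G$ is a PF-group, $\gamma_m(G)$ itself is PF-embedded in $G$ by \cite[Proposition 3.2]{FGJ08}, and by uniseriality its potent filtration has the form $\gamma_m(G)=\gamma_{m_1}(G)\gneq\gamma_{m_2}(G)\gneq\cdots$. The very first potent condition gives $\gamma_{m+p-1}(G)\le\gamma_{m_2}(G)^p=\gamma_{m_2+d}(G)$, and here $\gamma_{m+p-1}(G)\neq 1$ because $m+p-1<2p^r\le c$; hence $m_2-m\le p-1-d\le 0$, a contradiction, with no induction on $r$ and no passage to a quotient needed. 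To rescue your approach you would have to control the depth at which the filtration first enters $\gamma_m(G)$, or simply switch to the PF-embedding of $\gamma_m(G)$ as above; the rest of your coclass bookkeeping for $G/A$ is fine but is never reached.
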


\begin{proof}
    The proof follows along similar lines like the one of Theorem \ref{t:pwccoclass}. 
    Let $G$ be a PF-group of order $p^n$ and coclass $r$.
    Again, assume $n\ge 2p^r+r$, denote $c=n-r\ge 2p^r$, and let $m$ and $d$ be as in Lemma \ref{l:coclass}. The group $\gamma_m(G)$ is PF-embedded in $G$, see \cite[Proposition 3.2]{FGJ08}. As $G$ acts uniserially on $\gamma_m(G)$, there is a potent filtration of $\gamma_m(G)$ in $G$ that has the form $\gamma_m(G)=\gamma_{m_1}(G)\gneq \gamma_{m_2}(G)\gneq\cdots\gneq \gamma_{c+1}(G)=1$ for $m=m_1\lneq m_2\lneq\cdots$. By definition, $\gamma_{m+p-1}(G)\le \gamma_{m_2}(G)^p=\gamma_{m_2+d}(G)$. This gives that $m_2-m\le p-1-d\le 0$, a contradiction. Thus $G$ is not a PF-group.
\end{proof}

\begin{corollary}
    \label{c:infPFcoclass}
    There is no infinite pro-$p$ group of finite coclass that is also a PF-group.
\end{corollary}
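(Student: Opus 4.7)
The plan is to deduce the corollary as an immediate consequence of Proposition~\ref{p“:PFcoclass} by passing to finite quotients along the lower central series. Assume, for a contradiction, that $G$ is an infinite pro-$p$ group of finite coclass $r$ that is also a PF-group; the goal is to exhibit infinitely many pairwise non-isomorphic finite PF $p$-groups of coclass $r$, which then contradicts that proposition.

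The first step is to invoke the standard input from coclass theory: a pro-$p$ group of finite coclass is finitely generated, and for all sufficiently large $i$ the quotient $G/\gamma_i(G)$ is a finite $p$-group of coclass exactly $r$, with $|G/\gamma_i(G)| \to \infty$ as $i \to \infty$. This supplies the infinite family of candidate quotients. The second step is to verify that each $G/\gamma_i(G)$ inherits the PF-property from $G$. This is immediate from the equivalence recalled in Section~\ref{s:small}, stating that a closed normal subgroup is PF-embedded in a pro-$p$ group if and only if its image in every quotient by an open normal subgroup is PF-embedded there. Applying this with the subgroup taken to be $G$ itself and the open normal subgroup $\gamma_i(G)$ (which is open because $G$ is finitely generated of finite coclass) gives that $G/\gamma_i(G)$ is a PF-group, producing the required contradiction with Proposition~\ref{p“:PFcoclass}.

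There is essentially no serious obstacle in this argument, since both ingredients are already in hand. The only points meriting a line of verification are that $\gamma_i(G)$ is indeed open in $G$, which follows from finite generation of $G$, and that the coclass of $G/\gamma_i(G)$ stabilises to $r$ (rather than dropping strictly) for large $i$; the latter is built into the definition of finite coclass in the pro-$p$ setting, so nothing further is needed.
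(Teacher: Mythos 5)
Your argument is correct and is precisely the deduction the paper intends (the corollary is stated without proof as an immediate consequence of Proposition~\ref{p“:PFcoclass}): the quotients $G/\gamma_i(G)$ are finite of coclass $r$ and unbounded order, and each inherits the PF-property via the quotient criterion for PF-embedding recalled in Section~\ref{s:small}. No issues.
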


A finite $p$ group of order $p^n$ and nilpotency class equal to $n-1$, where $n\ge 4$, is said to be of {\it maximal class}. We find here the upper $\eta$-series of finite $p$-groups of maximal class. At first we state the following:

\begin{proposition}
    \label{l:p3}
    Let $G$ be a nonabelian group of order $p^3$.
    \begin{enumerate}
        \item If $\exp G=p$, then $\eta(G)=Z(G)$.
        \item If $\exp G=p^2$, then $G$ is powerful and thus $\eta (G)=G$.
    \end{enumerate}
\end{proposition}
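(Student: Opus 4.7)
The plan is to use the standard structural facts about nonabelian groups of order $p^3$ with $p$ odd: $Z(G)=[G,G]=\Phi(G)$ has order $p$, $G/Z(G)\cong C_p\times C_p$, and $G$ is generated by two elements, so $|G:\Phi(G)|=p^2$ forces $|\Phi(G)|=p$.

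For part (1), I would observe that $\exp G=p$ gives $H^p=1$ for every subgroup $H$ of $G$. If $N\trianglelefteq G$ is powerfully embedded, then $[N,G]\le N^p=1$, so $N\le Z(G)$. Conversely, $Z(G)$ is manifestly powerfully embedded since $[Z(G),G]=1$. Since $\eta(G)$ is the product of all powerfully embedded normal subgroups, $\eta(G)=Z(G)$.

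For part (2), the goal is to show $[G,G]\le G^p$. Pick $a\in G$ with $|a|=p^2$; then $1\ne a^p\in G^p$, so $G^p$ is nontrivial. On the other hand, $G^p\le\Phi(G)$, and $\Phi(G)=[G,G]=Z(G)$ has order $p$ as noted above. Hence $G^p=Z(G)=[G,G]$, which gives $[G,G]\le G^p$, i.e., $G$ is powerful. In particular $G$ itself is powerfully embedded, so $\eta(G)=G$.

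I do not expect any serious obstacle here: both parts reduce immediately to the identification $[G,G]=Z(G)=\Phi(G)$ of order $p$ and the triviality of $G^p$ in the exponent-$p$ case versus its nontriviality in the exponent-$p^2$ case. The only point that requires a moment's care is verifying $\Phi(G)=[G,G]$ in case (2) to pin down $G^p$ inside $\Phi(G)$; this follows from $G$ being $2$-generated together with $|[G,G]|=p$.
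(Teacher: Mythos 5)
Your proof is correct. Note that the paper actually states this proposition without any proof, treating it as a standard fact about nonabelian groups of order $p^3$ for odd $p$; your argument --- identifying $[G,G]=Z(G)=\Phi(G)$ of order $p$ and then observing that $G^p$ is trivial in the exponent-$p$ case but equals $\Phi(G)$ in the exponent-$p^2$ case --- is the natural one and correctly supplies the omitted details.
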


\begin{proposition}
    \label{p:maxclass}
    Let $G$ be a finite $p$-group of maximal class. Then $\eta(G)=Z(G)$.
\end{proposition}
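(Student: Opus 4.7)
Since $Z(G)\subseteq \eta(G)$ is automatic, the task is to prove $\eta(G)\subseteq Z(G)$. The plan splits into two steps: reduce to $\eta(G)\subseteq \gamma_2(G)$, then use the uniserial action together with a Hall commutator computation to pin down $\eta(G)=Z(G)$.

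For the first step, the quotient $\bar G=G/\gamma_3(G)$ is nonabelian of order $p^3$. By Proposition \ref{l:p3} it is either of exponent $p$---in which case $\eta(\bar G)=Z(\bar G)=\gamma_2(G)/\gamma_3(G)$ and $\eta(G)\subseteq \gamma_2(G)$---or isomorphic to the modular group $\mathrm{Mod}_{p^3}$ (and then powerful). The Modular alternative is to be excluded. When $n=4$ one has $\gamma_3(G)=Z(G)$, so $\bar G\cong \mathrm{Mod}_{p^3}$ would force $\mathrm{Mod}_{p^3}$ to be capable, contradicting the classical non-capability of modular maximal-cyclic $p$-groups for odd $p$. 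When $n\ge 5$, the maximal-class quotient $G/\gamma_4(G)$ of order $p^4$ has the same $\gamma_3$-quotient as $G$, and the $n=4$ case bars the Modular alternative there as well. Hence $G/\gamma_3(G)$ has exponent $p$, so $g^p\in \gamma_3(G)$ for every $g\in G$ and $\eta(G)\subseteq \gamma_2(G)$.

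By uniserial action of $G$ on $\gamma_2(G)$, $\eta(G)=\gamma_j(G)$ for some $2\le j\le n-1$. If $\gamma_j(G)$ were powerfully embedded with $j\le n-2$, then $\gamma_{j+1}(G)\subseteq \gamma_j(G)^p$, which after passing to the maximal-class quotient $H=G/\gamma_{j+2}(G)$ of order $p^{n'}$ (where $n'=j+2\ge 4$) would force the abelian group $\gamma_j(G)/\gamma_{j+2}(G)=\gamma_{n'-2}(H)$ of order $p^2$ to be cyclic. I would prove instead that for every maximal-class $p$-group $H$ of order $p^{n'}\ge p^4$, $\gamma_{n'-2}(H)$ is elementary abelian. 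Fix $w\in \gamma_{n'-3}(H)$ and $g\in H$; then $[w,g]\in \gamma_{n'-2}(H)$, and $\gamma_{n'-2}(H)=[\gamma_{n'-3}(H),H]$ is generated by such commutators. Applying the first step to $H$ gives $g^p\in \gamma_3(H)$, so in the Hall commutator expansion
\[
[w,g]^p \;=\; [w,g^p]\cdot \prod_{k\ge 2}[w,\underbrace{g,\ldots,g}_{k}]^{-\binom{p}{k}}
\]
inside the class-$(n'-1)$ group $H$, the first factor $[w,g^p]\in [\gamma_{n'-3}(H),\gamma_3(H)]\subseteq \gamma_{n'}(H)=1$; the $k$-th correction term lies in $\gamma_{n'-3+k}(H)$, which is trivial for $k\ge 3$, and for $k=2$ lies in $Z(H)=\gamma_{n'-1}(H)$ of order $p$ and is killed by the $\binom{p}{2}$-coefficient (divisible by $p$ for odd $p$). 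Hence $[w,g]^p=1$, and $\gamma_{n'-2}(H)$ is an abelian group of order $p^2$ generated by order-$p$ elements, hence elementary abelian. This gives $\gamma_j(G)^p\subsetneq \gamma_{j+1}(G)$ for every $2\le j\le n-2$, so no such $\gamma_j(G)$ is powerfully embedded and $\eta(G)=\gamma_{n-1}(G)=Z(G)$.

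The main obstacle is excluding the Modular case in the first step, which hinges on the non-capability of $\mathrm{Mod}_{p^3}$ for odd $p$; the subsequent Hall commutator computation is then a clean weight-and-coefficient count.
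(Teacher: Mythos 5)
Your proof is correct, but it reaches the conclusion by a genuinely different route than the paper. The paper identifies $\eta(G)=\gamma_r(G)$ via \cite[Proposition 3.1.2]{LGM02} and then rules out $r<n-1$ by a case split on $n$ versus $p+1$ and $r$ versus $n-p+1$, invoking the exponent results \cite[Proposition 3.3.2, Corollary 3.3.6]{LGM02} (which rest on the degree-of-commutativity machinery). You instead (a) show $G/\gamma_3(G)$ is the extraspecial group of exponent $p$ by excluding $\mathrm{Mod}_{p^3}$ through its non-capability --- a classical fact (Beyl--Felgner--Schmid) that the paper does not use, so you should cite it explicitly rather than call it ``classical''; and (b) replace the appeal to $\gamma_i(G)^p=\gamma_{i+p-1}(G)$ by a direct computation showing that the penultimate lower central factor $\gamma_{n'-2}(H)$ of every maximal-class quotient $H$ is elementary abelian. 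One small point in (b): the commutator expansion you write is exact here, but only because the iterated commutators $[w,{}_kg]$ all lie in $\gamma_{n'-2}(H)$, which is abelian of order $p^2$, and $[w,g,g]$ is central with $[w,g,g,g]=1$; it is worth saying this rather than attributing the identity to the class of $H$ alone. What each approach buys: the paper's argument is shorter granted the Leedham-Green--McKay results; yours is more self-contained, avoids the case division entirely, and your intermediate fact $G^p\le\gamma_3(G)$ is precisely what the paper later re-derives from \cite[Propositions 3.3.2, 3.3.3, Lemma 3.3.7]{LGM02} in the proof of Corollary \ref{c:etamaxclass}, so your route would streamline that corollary as well.
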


\begin{proof}
    Denote $|G|=p^n$ and $|G:\eta(G)|=p^r$. Note that $r\ge 2$. By \cite[Proposition 3.1.2]{LGM02}, we have that $\eta (G)=\gamma_r(G)$. The subgroups $\gamma_k(G)$ are then powerfully embedded in $G$ for all $k\ge r$. As $Z(G)=\gamma_{n-1}(G)$, we also have that $r\le n-1$.

    Suppose that $r<n-1$. If $n\le p+1$, then both $G/\gamma_{n-1}(G)$ and $\gamma_2(G)$ have exponent $p$ \cite[Proposition 3.3.2]{LGM02}. It follows that $1\neq \gamma_{r+1}(G)=[\gamma_r(G),G]\le \gamma_r(G)^p=1$, a contradiction. Thus $n>p+1$. Suppose further that $r>n-p+1$. Then \cite[Corollary 3.3.6]{LGM02} yields that
    $1\neq \gamma_{r+1}(G)\le\gamma_r(G)^p=\gamma_{r+p-1}(G)$, which cannot happen. We conclude that $n-p+1<r<n-1$. Then $\gamma_r(G)^p\le \gamma_{n-p+1}(G)^p=\gamma_n(G)=1$, which implies $\gamma_{r+1}(G)=1$. This is a contradiction. We thus have $r=n-1$, hence the result.
\end{proof}

\begin{corollary}
    \label{c:etamaxclass}
    Let $G$ be a finite $p$-group of maximal class. Then $\eta_i(G)=Z_i(G)$ for all $i\ge 0$.
\end{corollary}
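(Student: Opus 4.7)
The plan is to prove $\eta_i(G)\le Z_i(G)$ by induction on $i$; the reverse inclusion is already supplied by Lemma \ref{l:etaseries}(1). The base case $i=0$ is trivial and $i=1$ is exactly Proposition \ref{p:maxclass}. For $i\ge 2$, I would assume inductively that $\eta_{i-1}(G)=Z_{i-1}(G)$, and then Lemma \ref{l:etaseries}(3) (applied with $j=i-1$) identifies $\eta_i(G)/Z_{i-1}(G)$ with $\eta(\bar G)$ for $\bar G:=G/Z_{i-1}(G)$, while $Z_i(G)/Z_{i-1}(G)=Z(\bar G)$ by definition of the upper central series; the task reduces to showing $\eta(\bar G)=Z(\bar G)$. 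Writing $|G|=p^n$, the standard identification $Z_{i-1}(G)=\gamma_{n-i+1}(G)$ in a maximal class $p$-group makes $\bar G$ have order $p^{n-i+1}$ and nilpotency class $n-i$.

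When $n-i+1\ge 4$, $\bar G$ is itself of maximal class and Proposition \ref{p:maxclass} directly gives $\eta(\bar G)=Z(\bar G)$; this handles $1\le i\le n-3$. For $i=n-1$ the quotient is abelian of order $p^2$ and the equality is automatic, and for $i\ge n$ the statement is vacuous. The one nontrivial boundary is $i=n-2$, where $\bar G=G/\gamma_3(G)$ is nonabelian of order $p^3$. There I would invoke Proposition \ref{l:p3}: the equality $\eta(\bar G)=Z(\bar G)$ holds precisely when $\bar G$ has exponent $p$, equivalently when $G^p\le\gamma_3(G)$.

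The hard part will therefore be the auxiliary lemma $G^p\le\gamma_3(G)$ for every $p$-group of maximal class of order at least $p^4$. Without it, Proposition \ref{l:p3}(2) would make $\bar G$ powerful and force $\eta_{n-2}(G)=G\ne\gamma_2(G)=Z_{n-2}(G)$, breaking the corollary. To establish the lemma, my plan is a Hall--Petrescu computation on $[g^p,h]$ for $g,h\in G$: the expansion along the lower central series of $\langle g,h\rangle$ has $\binom{p}{k}$-exponents that are divisible by $p$ for $1\le k\le p-1$; the weight-$\ge p$ contributions lie in $\gamma_p(G)\subseteq\gamma_3(G)$; and the leftover factors of the form $\gamma_k(G)^p$ are pushed deeper into the lower central series via Lemma \ref{l:coclass} whenever its order hypothesis is met. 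Collecting terms shows $[g^p,h]=1$, so $g^p\in Z(G)=\gamma_{n-1}(G)\subseteq\gamma_3(G)$. The main technical obstacle I anticipate lies in the small-$n$ regime (in particular $n=4$) and in the exceptional prime $p=3$, where Lemma \ref{l:coclass} does not apply and the weight-$p$ term $\binom{p}{p}=1$ in the Hall--Petrescu expansion must be tracked directly.
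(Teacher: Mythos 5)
Your overall architecture coincides with the paper's: induct, use Proposition \ref{p:maxclass} while the quotient $G/Z_{i-1}(G)$ still has order $\ge p^4$ (hence is itself of maximal class), and at the order-$p^3$ boundary reduce via Proposition \ref{l:p3} to the auxiliary claim $G^p\le\gamma_3(G)$. You have correctly isolated that claim as the crux. The problem is your proposed proof of it. You plan to show $[g^p,h]=1$ for all $g,h\in G$, i.e.\ $G^p\le Z(G)$ — but this is \emph{false} for $p$-groups of maximal class once $|G|>p^{p+1}$. For the standard examples (quotients of $C_p\ltimes\mathbb{Z}_p[\zeta]$, $\zeta$ a primitive $p$-th root of unity), one has $s_1^p\in\gamma_p(G)\setminus\gamma_{p+1}(G)$, and since $Z(G)=\gamma_{n-1}(G)$ this element is not central as soon as $n-1>p$; more generally $\gamma_2(G)^p=\gamma_{p+1}(G)\ne 1$ there, so no Hall--Petrescu bookkeeping can kill $[g^p,h]$. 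Note also that you located the danger in the wrong place: the small-$n$ regime ($n\le p+1$) is precisely where $G/Z(G)$ does have exponent $p$ and your computation would succeed; it is the \emph{large}-$n$ regime where it collapses.

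What is actually needed, and what the paper does, is only the weaker containment $G^p\le\gamma_3(G)$. Since $G=\langle s,s_1\rangle$ in the standard maximal-class notation, the results of \cite[Propositions 3.3.2, 3.3.3 and Lemma 3.3.7]{LGM02} give $s^p,s_1^p\in\gamma_3(G)$ directly; then, because $G/\gamma_3(G)$ has class $2<p$ and is hence regular and generated by two elements of order dividing $p$, it has exponent $p$, i.e.\ $G^p\le\gamma_3(G)$. So your reduction is sound, but the auxiliary lemma must be obtained from the uniserial/exponent structure of maximal class groups rather than from a centrality statement that does not hold.
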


\begin{proof}
    Let $|G|=p^n$. The upper central series 
    $$1=Z_0(G)\le Z_1(G)\le\cdots \le Z_{n-1}(G)=G$$
    has all sections, except for the last one, of order $p$. Using Proposition \ref{p:maxclass} and induction, we conclude that $\eta_i(G)=Z_{i}(G)$ for $i\le n-3$. The group $G/\eta_{n-3}(G)$ has order $p^3$. Using the notations of \cite[p. 56]{LGM02}, we have that $G=\langle s,s_1\rangle$. Combination of Proposition 3.3.2, Proposition 3.3.3 and Lemma 3.3.7 of \cite{LGM02} shows that $s^p$ and $s_1^p$ both belong to $\gamma_3(G)=\eta_{n-3}(G)$. As $G/\gamma_3(G)$ has class 2 and $p>2$, it follows that $G^p\le \eta_{n-3}(G)$. Proposition \ref{l:p3} now shows that $\eta_{n-2}(G)/\eta_{n-3}(G)=\eta(G/\eta_{n-3}(G))=Z(G/Z_{n-3}(G))=Z_{n-2}(G)/\eta_{n-3}(G)$. Therefore $\eta_{n-2}(G)=Z_{n-2}(G)$ and $\eta_{n-1}(G)=Z_{n-1}(G)=G$.
\end{proof}

Therefore, if $G$ is a finite $p$-group of coclass $1$, then $\pwc(G)$ is equal to the nilpotency class of $G$. On the other hand, there are several $p$-groups of coclass two with powerful class strictly smaller than the nilpotency class. For example, there are four powerful $p$-groups of order $p^4$ and nilpotency class equal to $2$.

\end{document}